\theoremstyle{plain}
\newtheorem{teo}{Theorem}[section]
\newtheorem{thm}[teo]{Theorem}
\newtheorem{cor}[teo]{Corollary}
\newtheorem{lem}[teo]{Lemma}
\newtheorem{remark}[teo]{Remark}
\newtheorem{pro}[teo]{Proposition}
\newtheorem{examp}[teo]{Example}
\def\M{\text{\rm M}}
\def\deg{\text{\rm deg}}
\def\dim{\text{\rm dim}}
\newcommand{\overbar}[1]{\mkern 1.5mu\overline{\mkern-1.5mu#1\mkern-1.5mu}\mkern 1.5mu}
\author{Tsiu-Kwen Lee$^\dag$ and Jheng-Huei Lin$^\ddag$}
\title{\textbf{Certain functional identities on division rings}}
\date{}
\begin{document}

\maketitle

\centerline {Department of Mathematics, National Taiwan
University$^\dag$, $^\ddag$}

\centerline {Taipei 106, Taiwan}

\centerline {tklee@math.ntu.edu.tw$^\dag$ and r01221012@ntu.edu.tw$^\ddag$}

\vskip8pt

\begin{abstract}
\noindent
We study the functional identity $G(x)f(x)=H(x)$ on a division ring $D$, where $f \colon D\to D$ is an additive map and $G(X)\ne 0, H(X)$ are generalized polynomials in the variable $X$ with coefficients in $D$. Precisely, it is proved that either $D$ is finite-dimensional over its center or $f$ is an elementary operator. Applying the result and its consequences, we prove that if $D$ is a noncommutative division ring of characteristic not $2$,
then the only solution of additive maps $f, g$ on $D$ satisfying the identity
$f(x) = x^n g(x^{-1})$ with $n\ne 2$ a positive integer is the trivial case, that is, $f=0$ and $g=0$.
This extends Catalano and Merch\'{a}n's result in 2023  to get a complete solution.
\end{abstract}

{ \hfill\break \noindent 2020 {\it Mathematics Subject Classification.}\ \  16R60, 16R50, 16K40.   \vskip4pt


\noindent {\it Key words and phrases:}\ \ Division ring, PI-ring, GPI-algebra, generalized polynomial identity, functional identity.   \vskip4pt

\noindent Corresponding author:\ \ Jheng-Huei Lin.   \vskip12pt

\section{Introduction}

  Throughout, rings or algebras are associative with unity. The aim of this paper is to study certain functional identities on division rings.
  The study of identities on division rings involving inverses can be traced back to the research of the Cauchy's functional equation $f(x+y)=f(x)+f(y)$ in 1821.
  In 1963, Halperin raised a question of whether an additive map $f \colon \mathbb{R} \rightarrow \mathbb{R}$ satisfying the identity $f(x)=x^2 f(x^{-1})$ is continuous or not (see \cite{Aczel1964}).
  In 1964, Kurepa \cite{Kurepa1964} proved that if $f,g \colon \mathbb{R} \rightarrow \mathbb{R}$ are nonzero additive maps, satisfying the identity $f(t) = P(t)g(t^{-1})$, where $P \colon \mathbb{R} \rightarrow \mathbb{R}$ is continuous with $P(1)=1$, then $P(t) = t^2$, $f(t)+g(t)=2tg(1)$, and the map $t \mapsto f(t)-tf(1)$ is a derivation.
  In 1968, Nishiyama and Horinouchi \cite{Nishiyama1968} studied the identity $ f(x^n) = ax^k f(x^m)$, where $n,k,m$ are integers, on real numbers.
  In 1970--1971, Kannappan and Kurepa \cite{Kannappan1970,Kannappan1971} studied several identities with inverses on real numbers.
  In 1987, Ng \cite{Ng1987} completely characterized the identity $F(x) + M(x)G(1/x)=0$, where $F,G$ are additive and $M$ is multiplicative, on a field of characteristic not $2$.
  In the same year, Vukman \cite{Vukman1987} proved that if $f$ is an additive map on a noncommutative division ring of characteristic not $2$ satisfying the identity $f(x)=-x^2 f(x^{-1})$, then $f=0$.
  In 1990, Bre\v sar and Vukman \cite{Bresar1990} investigated the identity $f(x)=x^2 g(x^{-1})$ on the algebra of all bounded linear operators of a Banach space.
  We refer the reader to \cite{Ebanks2015,Ebanks2017} for related history.

Our present study is also related to the notion of derivations. By a derivation of a ring $R$ we mean an additive map $d \colon R \rightarrow R$ satisfying $d(xy)=d(x)y+xd(y)$ for all $x,y \in R$. All derivations of a ring $R$ satisfy the identity $f(x)= -x f(x^{-1})x$ for all invertible elements $x\in R$.
  In 2018, Catalano \cite{Catalano2018} studied a more general form $f(x)x^{-1} + xg(x^{-1})=0$ on division rings and on matrix rings over division rings.
  She also raised a question of whether the condition ``${\text{\rm char}}\,D \ne 3$" can be dropped in her theorem \cite[Theorem 4]{Catalano2018}.
  In 2020, Arga\c{c}, Ero{\v g}lu, and the authors \cite{Argac2020} generalized her results and answered the question in the affirmative.
  More precisely, they proved that if $R$ is either a noncommutative division ring or a matrix ring $\M_n(D)$, $n \geq 1$, over a division ring $D$ with ${\text{\rm char}}\,D \ne 2$, and if $f , g\colon R\to R$ are additive maps satisfying the identity $f(x)x^{-1} + xg(x^{-1})=0$, then $f(x) = xq + d(x)$ and $g(x) = -qx + d(x)$, where $d$ is a derivation of $R$ and $q:=f(1)$.
  In 2023, Dar and Jing \cite{Dar2023} showed that if $f,g$ are additive maps on $R$, which is either a noncommutative division ring of characteristic not $2$ or a matrix ring $\M_n(D)$, $n > 1$, over a noncommutative division ring $D$ with ${\text{\rm char}}\,D \ne 2,3$, satisfying the identity $f(x)=-x^2 g(x^{-1})$, then $f(x) = xq$ and $g(x) = -xq$ for some $q \in R$.

  In a recent paper \cite{Catalano2023}, Catalano and Merch\'{a}n studied the identity
$  f(x)=-x^n g(x^{-1})$
  on a division ring, where $f , g$ are additive maps and $n$ is a positive integer. They obtained the following result for the cases $n=3$ and $n=4$. For a ring $R$, we denote $R^{\times}$ to be the set of all units in $R$.

  \begin{thm} \label{thm1} {\rm (\cite[Theorem 1]{Catalano2023})}
    Let $D$ be a division ring and $f , g \colon D\to D$ be additive maps. Then $f=g=0$ if one of the following holds:
    \begin{enumerate} [{\rm (i)}]
      \item ${\text{\rm char}}\,D \ne 2,3$ and $f(x)=-x^3 g(x^{-1})$ for all $x \in D^{\times}$;
      \item ${\text{\rm char}}\,D \ne 2,3$ and $f(x)=-x^4 g(x^{-1})$ for all $x \in D^{\times}$.
    \end{enumerate}
  \end{thm}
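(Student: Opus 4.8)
The plan is to collapse the two-variable hypothesis into a single polynomial functional identity satisfied by $f$ alone, and then to read off $f=0$ (hence $g=0$) from the explicit shape of the resulting polynomial. Write $n=3$ in case (i) and $n=4$ in case (ii); the hypothesis $f(x)=-x^{n}g(x^{-1})$ is equivalent to $g(x^{-1})=-x^{-n}f(x)$ for all $x\in D^{\times}$. Two elementary facts drive everything: the partial-fractions identity
\[
x^{-1}-(x+1)^{-1}=(x^{2}+x)^{-1},
\]
valid whenever $x,x+1\in D^{\times}$, and the remark that $x$ commutes with $x+1$, so that $(x^{2}+x)^{n}=x^{n}(x+1)^{n}$ and $(x+1)^{n}=\sum_{k=0}^{n}\binom{n}{k}x^{k}$.

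First I would apply the additivity of $g$ to $x^{-1}=(x+1)^{-1}+(x^{2}+x)^{-1}$ and substitute the hypothesis at $x$, $x+1$, and $x^{2}+x$. Clearing the inverse factors on the left (this is where commutativity of $x$ and $x+1$ enters) and using $f(x+1)=f(x)+f(1)$ together with $f(x^{2}+x)=f(x^{2})+f(x)$ gives, for all $x\neq 0,-1$,
\[
f(x^{2})=\big[(x+1)^{n}-x^{n}-1\big]f(x)-x^{n}f(1).
\]
Replacing $x$ by $x+1$ in this identity, using $f((x+1)^{2})=f(x^{2})+2f(x)+f(1)$, and subtracting the identity itself to cancel $f(x^{2})$, I obtain, for all $x\neq 0,-1,-2$,
\[
\big(2(x+1)^{n}-x^{n}-(x+2)^{n}+2\big)\big(f(x)+f(1)\big)=0.
\]
With $y=x+1$ this reads $G(y)f(y)=0$ for all $y\neq 0,\pm 1$, where $G(Y)=2Y^{n}-(Y-1)^{n}-(Y+1)^{n}+2$ is a polynomial with integer (in particular, central) coefficients.

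It then remains to compute $G$. For $n=3$ the odd-degree terms of $(Y-1)^{3}$ and $(Y+1)^{3}$ cancel and $G(Y)=2-6Y$; for $n=4$ the even-degree terms cancel and $G(Y)=-12Y^{2}$. Since $\mathrm{char}\,D\neq 2,3$, in case (i) the element $G(y)=2(1-3y)$ is invertible for every $y\neq 3^{-1}$, and in case (ii) the element $G(y)=-12y^{2}$ is invertible for every $y\neq 0$; either way $f$ vanishes off a finite subset of $D$. A routine additivity argument then forces $f\equiv 0$ (if $D$ is infinite, every element is a sum of two elements of the cofinite set on which $f$ already vanishes; the remaining small fields are handled directly), and then $g(x)=-x^{n}f(x^{-1})=0$ for all $x$.

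The delicate point is the middle step, where one must check that all the noncommutative manipulations are legitimate; they are, precisely because $x$ commutes with $x+1$. I also expect the verification $G\neq 0$ to be exactly where the hypotheses are essential: $G$ vanishes identically when $n=2$ (which is why that exponent is genuinely exceptional), and for larger even $n$ it can vanish modulo a small prime, so the general statement announced in the abstract really does require the functional-identity/GPI machinery developed in the paper — but for $n\in\{3,4\}$ with $\mathrm{char}\,D\neq 2,3$ the elementary route above is enough.
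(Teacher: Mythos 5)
Your proof is correct, but it takes a genuinely different --- and considerably heavier --- route than the one this paper uses for this statement. Your computations check out: the identity $x^{-1}-(x+1)^{-1}=\bigl(x(x+1)\bigr)^{-1}$ does give $f(x^2)=\bigl((x+1)^n-x^n-1\bigr)f(x)-x^nf(1)$ for $x\ne 0,-1$; shifting $x\mapsto x+1$ and subtracting yields $\bigl((y+1)^n+(y-1)^n-2y^n-2\bigr)f(y)=0$ for $y\ne 0,\pm1$; and this coefficient equals $6Y-2$ for $n=3$ and $12Y^2$ for $n=4$, hence vanishes at no more than one point of $D$ when $\mathrm{char}\,D\ne 2,3$, so $f$ is zero off a finite set and the sum-of-two-generic-elements argument (with Wedderburn plus a small direct check disposing of finite $D$) finishes. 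The paper, however, obtains Theorem \ref{thm1} as an immediate consequence of the scaling argument of Theorem \ref{thm2}: substituting $2x$ for $x$ in $f(x)=-x^{n}g(x^{-1})$ and using additivity of $f$ and $g$ gives $2(2^{\,n-2}-1)f(x)=0$, i.e.\ $2f=0$ for $n=3$ and $6f=0$ for $n=4$, which settles both cases in one line and even shows that the hypothesis $\mathrm{char}\,D\ne 3$ is superfluous in case (i). Your Hua-type manipulation is essentially the machinery the paper reserves for Section 5 (compare your $G$ with the polynomial $P(X)=(1+X)^n+(1-X)^n-2X^n-2$ of Lemma \ref{lem4}), where it is genuinely needed because the scaling argument degenerates exactly when $\mathrm{char}\,D=p$ with $p-1\mid n-2$. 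What your route buys is an explicit explanation of why $n=2$ is exceptional (your $G$ vanishes identically there); what the paper's route buys is brevity, uniformity in $n$, and a sharper hypothesis.
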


  We remark that, by considering $f$ and $-g$, it is easy to see that studying the identity $f(x)=-x^n g(x^{-1})$ is equivalent to studying the identity
  \begin{eqnarray} \label{eq:17}
    f(x) = x^n g(x^{-1})
  \end{eqnarray}
  where $f,g$ are additive maps and $n $ is a positive integer. Note that the identity Eq.\eqref{eq:17} on fields has been completely solved by Ng (see \cite[Theorem 4.1]{Ng1987}) because the map $x \mapsto x^n$ is multiplicative in this case.

  Dar and Jing have solved the case $n=2$ in Eq.\eqref{eq:17} as follows.

 \begin{thm}(\cite[Theorem 4.1]{Dar2023})
 Let $D$ be a division ring, which is not a field, with characteristic
different from $2$ and let $f,  g\colon D\to D$ be additive maps satisfying the identity
$f(x)+x^2g(x^{-1}) = 0$ for all $x\in D^{\times}$. Then $f(x) = xq$ and $g(x) =-xq$ for all $x\in D$,
where $q\in D$ is a fixed element.
 \label{thm13}
  \end{thm}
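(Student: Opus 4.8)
The plan is to strip the two-map identity down to the single-map identity $\psi(x)=-x^{2}\psi(x^{-1})$, which for a noncommutative division ring of characteristic $\neq 2$ is already known (Vukman \cite{Vukman1987}) to force $\psi=0$; the reduction will be a short linearisation. First I would normalise. Put $q:=f(1)$; evaluating $f(x)+x^{2}g(x^{-1})=0$ at $x=1$ gives $g(1)=-q$. Replacing $f,g$ by $\widetilde f(x):=f(x)-xq$ and $\widetilde g(x):=g(x)+xq$, one checks at once that $\widetilde f,\widetilde g$ are additive, that $\widetilde f(1)=\widetilde g(1)=0$, and that $\widetilde f(x)=-x^{2}\widetilde g(x^{-1})$ for all $x\in D^{\times}$ (the correction terms cancel). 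Hence I may assume $f(1)=g(1)=0$ and it suffices to prove $f=g=0$; in terms of the original maps this is exactly $f(x)=xq$, $g(x)=-xq$. Replacing $x$ by $x^{-1}$ in $f(x)=-x^{2}g(x^{-1})$ also yields the companion identity $g(x)=-x^{2}f(x^{-1})$, so $f$ and $g$ enter symmetrically.

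The crux is to prove $f=g$. Since $f(1)=0$, additivity gives $f(x+1)=f(x)$ for every $x\in D$. Fix $x\in D\setminus\{0,-1\}$. A direct check (one-sided inverses in a division ring are two-sided) shows $(x+1)^{-1}=1-(1+x^{-1})^{-1}$, so using $g(1)=0$,
\[
 f(x)=f(x+1)=-(x+1)^{2}\,g\bigl((x+1)^{-1}\bigr)=(x+1)^{2}\,g\bigl((1+x^{-1})^{-1}\bigr).
\]
On the other hand, writing $f(x)=-x^{2}g(x^{-1})=-x^{2}g(x^{-1}+1)$ (again $g(1)=0$), applying the companion identity to $x^{-1}+1$, and using $x^{2}(x^{-1}+1)^{2}=\bigl(x(x^{-1}+1)\bigr)^{2}=(x+1)^{2}$ (all the factors involved commute, lying in the commutative subfield generated by $x$),
\[
 f(x)=(x+1)^{2}\,f\bigl((1+x^{-1})^{-1}\bigr).
\]
Comparing and cancelling the invertible element $(x+1)^{2}$ gives $f\bigl((1+x^{-1})^{-1}\bigr)=g\bigl((1+x^{-1})^{-1}\bigr)$. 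As $x$ runs through $D\setminus\{0,-1\}$, the element $(1+x^{-1})^{-1}$ runs through all of $D^{\times}\setminus\{1\}$; combined with $f(0)=g(0)=0$ and $f(1)=g(1)=0$ this gives $f=g$ on $D$.

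With $f=g$ the hypothesis becomes $f(x)=-x^{2}f(x^{-1})$ on $D^{\times}$. Since $D$ is noncommutative with $\mathrm{char}\,D\neq 2$, Vukman's theorem \cite{Vukman1987} yields $f=0$, hence $g=0$; reinstating the normalisation gives $f(x)=xq$ and $g(x)=-xq$ with $q=f(1)$, as claimed.

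I expect the real work to be in the second paragraph — choosing exactly the substitution ($x\mapsto x+1$, then re-expressing everything through $(1+x^{-1})^{-1}$) that makes the two formulas for $f(x)$ land on the same argument, so that the mere fact that $D$ is a division ring forces $f=g$; note that this step does not even use $\mathrm{char}\,D\neq 2$. Noncommutativity then does all its work in the appeal to Vukman's theorem, and it is genuinely needed: over a field the statement is false, since any nonzero derivation $d$ of $D$ produces the solution $f(x)=xq+d(x)$, $g(x)=-xq+d(x)$.
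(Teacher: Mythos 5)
Your proposal is correct, but there is nothing in the paper to compare it with: Theorem \ref{thm13} is imported verbatim from Dar and Jing \cite[Theorem 4.1]{Dar2023} and is not proved in this paper, so you have in effect supplied an independent proof. I checked the details and they all hold: the normalisation $\widetilde f(x)=f(x)-xq$, $\widetilde g(x)=g(x)+xq$ does preserve the identity and reduces to the case $f(1)=g(1)=0$; the Hua-type identity $(x+1)^{-1}=1-(1+x^{-1})^{-1}$ is valid for $x\ne 0,-1$; the two expressions $f(x)=(x+1)^{2}g\bigl((1+x^{-1})^{-1}\bigr)$ and $f(x)=(x+1)^{2}f\bigl((1+x^{-1})^{-1}\bigr)$ are both correct (the commutation $x^{2}(x^{-1}+1)^{2}=(x+1)^{2}$ holds because all factors lie in $C_D(x)$); and $(1+x^{-1})^{-1}$ does sweep out $D^{\times}\setminus\{1\}$, so $f=g$ everywhere. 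The final appeal to Vukman's theorem \cite{Vukman1987} for $f(x)=-x^{2}f(x^{-1})$ is exactly where noncommutativity and ${\rm char}\,D\ne 2$ are consumed, and your derivation counterexample over fields is accurate (for a field, $d(x^{-1})=-x^{-2}d(x)$, so $f(x)=xq+d(x)$, $g(x)=-xq+d(x)$ is a genuine nontrivial solution). This route — reduce to the symmetric case $f=g$ via a single Hua substitution and then quote Vukman — is shorter and more modular than arguments that linearise the full two-variable Hua identity (as in Lemma \ref{lem4} of this paper or in \cite{Dar2023}); what it buys is brevity, at the cost of depending on Vukman's 1987 result as a black box.
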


In the paper we study certain functional identities involving maps defined by generalized polynomials in the variable $X$ with coefficients in a division ring $D$ with center $Z(D)$.
  Let $D\{X\}$ denote the free product of $Z(D)$-algebra $D$ and the polynomial algebra $Z(D)[X]$ over $Z(D)$. Precisely, a monomial in $D\{X\}$  of degree $s$ is of the form
  $$
  a_1Xa_2Xa_3\cdots a_sXa_{s+1}
  $$
  for some $a_i\in D$. An element of $D\{X\}$ is the sum of finitely many monomials. We refer the reader to \cite{Martindale1969} or \cite{Chuang1988}.\vskip6pt

\noindent {\bf Definition.}\ \  A map $f$ from $D$ into itself is called an elementary operator if there exist finitely many $a_i, b_i\in D$ such that $f(x)=\sum_{i}a_ixb_i$ for all $x\in D$. \vskip6pt

We will prove the following result.\vskip6pt

\noindent {\bf Theorem A.} (Theorem \ref{thm11}) {\it Let $D$ be a division ring, and let $f\colon D\to D$ be  an additive map
  satisfying $G(x)f(x)=H(x)$ for all $x\in D$, where $G(X), H(X)\in D\{X\}$.
 If $G(X)\ne 0$, then either $D$ is finite-dimensional over $Z(D)$ or $f$ is an elementary operator.}\vskip6pt

Applying such a characterization and its consequences, we solve Eq.\eqref{eq:17}, which extends Catalano and Merch\'{a}n's result in 2023  to get a complete solution.  \vskip6pt

\noindent {\bf Theorem B.} (Theorem \ref{thm10}) {\it Let $D$ be a noncommutative division ring with ${\text{\rm char}}\,D \ne 2$. Let $f, g \colon D \rightarrow D$ be additive maps satisfying
  $f(x) = x^n g(x^{-1})$
  for all $x \in D^{\times}$, where $n\ne 2$ is a positive integer. Then $f = 0$ and $g = 0$.}\vskip6pt

\section{The identity $G(x)f(x)=H(x)$}
Throughout, let $D$ be a division ring with center $Z(D)$.
We define $D\{X_1,\ldots,X_m\}$ to be the free product of $Z(D)$-algebras $D$ and the free algebra $Z(D)\{X_1,\ldots,X_m\}$ over $Z(D)$ or
the generalized free algebra over its center $Z(D)$ in the variable $X_1,\ldots,X_m$ with coefficients in $D$
(see \cite{Martindale1969} or \cite{Chuang1988}).
We say that $D$ is a GPI-algebra if there exists a nonzero $g(X_1,\ldots,X_s)\in D\{X_1,\ldots,X_s\}$ such that $g(x_1,\ldots,x_s)=0$ for all $x_i\in D$.
In this case, we say that $g(X_1,\ldots,X_s)$ is a nontrivial GPI for $D$.
The following is a special case of \cite[Theorem 3]{Martindale1969}.

\begin{thm} (Martindale 1969)\label{thm12}
Every division GPI-algebra is finite-dimensional over its center.
\end{thm}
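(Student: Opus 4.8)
This result is classical; in the paper it is quoted directly from \cite{Martindale1969}, and here I outline how one would prove it from the structure theory of prime rings with a generalized polynomial identity developed by Amitsur and Martindale. The idea is to reduce the problem to producing a single minimal idempotent and then to exploit the local finiteness that a nontrivial GPI forces on a primitive ring.

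First I would collect the standard ring-theoretic facts about a division ring $D$: it is a prime ring, and it is (left) primitive, acting faithfully and irreducibly on the simple module ${}_DD$; moreover its Martindale symmetric ring of quotients is $D$ itself and its extended centroid is $Z(D)$, because every nonzero element of $D$ is invertible, so every nonzero ideal of $D$ equals $D$. Since ${}_DD$ is already simple, $D$ is a minimal left ideal of itself, generated by the idempotent $1$; in particular $\soc(D)=D\neq 0$ and $1$ is a minimal idempotent of $D$ with $1\cdot D\cdot 1=D$.

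The key input — and the step I expect to be the genuine obstacle, being precisely the content of \cite[Theorem 3]{Martindale1969} — is the following: if $R$ is a primitive ring satisfying a nontrivial GPI, then $R$ has nonzero socle and, for every minimal idempotent $e$ of $R$, the division ring $eRe$ is finite-dimensional over the extended centroid $C$. The mechanism behind this is to multilinearise the given identity, use a linear-independence (Vandermonde-type) argument over $Z(D)$ to bring it to a controlled shape, and then invoke the density theorem to manufacture a finite-rank transformation lying in $\soc(R)$ and to bound $\dim_C eRe$. Applying this with $R=D$, $e=1$ and $C=Z(D)$, and using the hypothesis that $D$ is a GPI-algebra, we conclude that $D=1\cdot D\cdot 1$ is finite-dimensional over $Z(D)$, as required. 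For the purposes of the present paper one simply invokes Martindale's theorem; the outline above records the mechanism underlying the citation.
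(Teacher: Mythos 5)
Your proposal is correct and takes the same route as the paper, which states this result without proof as a special case of \cite[Theorem 3]{Martindale1969}; your reduction (a division ring is primitive with extended centroid $Z(D)$, the identity $1$ is a minimal idempotent with $1\cdot D\cdot 1=D$, so Martindale's conclusion that $eRe$ is finite-dimensional over the extended centroid specializes to $[D:Z(D)]<\infty$) is exactly the standard specialization underlying the citation. There is nothing to add.
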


Given additive maps $f_{i1},\ldots,f_{is}$ from $D$ into itself and $a_{ij}\in D$, let
$$
F(x):=\sum_{i}a_{i1}f_{i1}(x)a_{i2}f_{i2}(x)a_{i3}\cdots a_{is}f_{is}(x)a_{i\,{s+1}}
$$
for all $x\in D$. Applying the standard linearizing argument to $F(x)$, we define
$$
F^{(1)}(x_1):=F(x_1),\ \ F^{(2)}(x_1, x_2):=F^{(1)}(x_1+x_2)-F^{(1)}(x_1)-F^{(1)}(x_2).
$$
In general, let
 \begin{eqnarray*}
   && F^{(k+1)}(x_1,\cdots, x_k, x_{k+1})\\
&:=&F^{(k)}(x_1,\cdots, x_{k-1}, x_k+x_{k+1})-F^{(k)}(x_1,\cdots, x_{k-1}, x_k)-F^{(k)}(x_1,\cdots, x_{k-1}, x_{k+1})
\end{eqnarray*}
for $k\geq1$. It is well-known that
$$
F^{(s)}(x_1,\cdots, x_{s-1}, x_s)=\sum_{i}\sum_{\sigma\in \text{\rm Sym}(s)}a_{i1}f_{i1}(x_{\sigma(1)})a_{i2}f_{i2}(x_{\sigma(2)})a_{i3}\cdots a_{is}f_{is}(x_{\sigma(s)})a_{i\,{s+1}}
$$
for all $x_i\in D$, where $\text{\rm Sym}(s)$ is the symmetric group of $\{1, 2,\ldots,s\}$.
If $k>s$ we have
$$
F^{(k)}(x_1,\cdots, x_{k-1}, x_k)=0
$$
for all $x_i\in D$. The following is well-known.

\begin{remark}\label{remark1}
Let $D$ be a division ring, and let $0\ne G(X)\in D\{X\}$.

(i)\ If $\deg\, G(X)>1$, then
$G(X+Y)-G(X)-G(Y)\ne 0$ in $D\{X, Y\}$.

(ii)\ If $G(X+Y)-G(X)-G(Y)\ne 0$
in $D\{X, Y\}$ and $G(0)=0$, then $\deg\, G(X)>1$.

(iii)\ $G(X+Y)=G(X)+G(Y)$ if and only if  there exist finitely many $a_i, b_i\in D$ such that $G(X)=\sum_ia_iXb_i$.

(iv)\ If $s:=\deg\, G(X)\geq 1$ and $G(0)=0$, then $G^{(s)}(X_1,\cdots,X_s)$ is nonzero and multilinear in $D\{X_1,\cdots,X_s\}$.
\end{remark}

Given additive maps $f_{ij}\colon D\to D$, let
$F(x):=\sum_sF_s(x)$ for $x\in D$, where
$$
F_s(x):=\sum_{i}a_{i1s}f_{i1}(x)a_{i2s}f_{i2}(x)a_{i3s}\cdots a_{iss}f_{is}(x)a_{i\,{s+1}s}
$$
for all $x\in D$. For a positive integer $t$ we define
$$
F^{(t)}(x_1,\cdots, x_t):=\sum_s{F_s}^{(t)}(x_1,\cdots, x_t)
$$
for all $x_i\in D$. We are now ready to prove the first main theorem in the paper.

\begin{thm} \label{thm11}
  Let $D$ be a division ring, and let $f\colon D\to D$ be  an additive map
  satisfying $G(x)f(x)=H(x)$ for all $x\in D$, where $G(X), H(X)\in D\{X\}$.
 If $G(X)\ne 0$, then either $D$ is finite-dimensional over its center or $f$ is an elementary operator.
 \end{thm}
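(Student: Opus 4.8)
The plan is to prove the statement by induction on $\deg G$. I may assume from the start that $D$ is \emph{not} finite-dimensional over $Z(D)$, for otherwise there is nothing to prove; by Theorem~\ref{thm12} (Martindale) this means $D$ is not a GPI-algebra, so that every generalized polynomial, in any set of variables and with coefficients in $D$, which vanishes on all of $D$ is already the zero element of the corresponding free product. Under this assumption the task is to show $f$ is an elementary operator. The base case $\deg G=0$ is immediate: then $G=c$ for a fixed $0\ne c\in D$, so $cf(x)=H(x)$, and since $f$ is additive the generalized polynomial $H(X+Y)-H(X)-H(Y)$ vanishes on $D$ and is therefore $0$ in $D\{X,Y\}$; by Remark~\ref{remark1}(iii), $H(X)=\sum_i a_iXb_i$, whence $f(x)=\sum_i (c^{-1}a_i)xb_i$ is elementary.

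For the inductive step, assume $s:=\deg G\ge1$. First I would take the second difference of $G(x)f(x)=H(x)$: substituting $x\mapsto x+y$, subtracting the instances at $x$ and at $y$, and using $f(x+y)=f(x)+f(y)$, one obtains
\[
\bigl(G(x+y)-G(x)\bigr)f(x)+\bigl(G(x+y)-G(y)\bigr)f(y)=H(x+y)-H(x)-H(y)
\]
for all $x,y\in D$. Freezing $y$ at a value $y_0\in D$ to be chosen and rearranging turns this into a functional identity of exactly the same kind,
\[
\widetilde G(x)\,f(x)=\widetilde H(x)\qquad(x\in D),
\]
where $\widetilde G(X):=G(X+y_0)-G(X)\in D\{X\}$ and $\widetilde H(X):=H(X+y_0)-H(X)-H(y_0)-\bigl(G(X+y_0)-G(y_0)\bigr)f(y_0)\in D\{X\}$, using that $G(y_0)$ and $f(y_0)$ are fixed elements of $D$. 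The top homogeneous component of $G(X+y_0)$ equals that of $G(X)$, so these cancel in $\widetilde G$ and $\deg\widetilde G\le s-1$; thus the induction hypothesis will apply to the new identity \emph{provided} $\widetilde G\ne0$.

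The only delicate point — the step I expect to be the main obstacle — is to choose $y_0$ so that $\widetilde G\ne0$ in $D\{X\}$; that is, to guarantee that a single difference does not annihilate $G$, and this is exactly where non-GPI-ness is used. Consider $P(X,Y):=G(X+Y)-G(X)\in D\{X,Y\}$. Substituting $X\mapsto0$ gives $G(Y)-G(0)$, which is nonzero since $\deg G\ge1$, so $P\ne0$ in $D\{X,Y\}$. As $D$ is not a GPI-algebra, $P$ cannot vanish on all of $D$, so there are $x_1,y_0\in D$ with $P(x_1,y_0)\ne0$; for this $y_0$, the generalized polynomial $\widetilde G=P(X,y_0)\in D\{X\}$ takes a nonzero value at $X=x_1$ and so is nonzero. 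With such $y_0$ fixed, the identity $\widetilde G(x)f(x)=\widetilde H(x)$ has $\widetilde G\ne0$ and $\deg\widetilde G<\deg G$, and since $D$ is not finite-dimensional over its center the induction hypothesis gives that $f$ is an elementary operator. This closes the induction.

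Two remarks. The argument uses no hypothesis on $\operatorname{char}D$, and no element of $D$ is ever inverted except the scalar $c$ in the base case — both matching the statement. One might instead try to linearize the identity all the way to full multilinearity in $s+1$ variables and then freeze $s$ of them, but the coefficient of $f$ produced that way is the full symmetrization of the top homogeneous part of $G$, which carries a factor $s!$ and can vanish when $\operatorname{char}D\le s$; the degree-lowering induction above sidesteps this difficulty.
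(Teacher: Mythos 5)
Your proof is correct, and it gets to the conclusion by a differently organized route than the paper's. The paper linearizes $G(x)f(x)=H(x)$ fully, in one step, to $s+1$ variables (where $s=\deg\,G(X)$): the coefficient of $f(x_{s+1})$ that emerges is the complete multilinearization ${G_1}^{(s)}(X_1,\ldots,X_s)$ of the top homogeneous part of $G$, which is automatically nonzero in $D\{X_1,\ldots,X_s\}$ by Remark \ref{remark1}(iv); one then either evaluates at a point where it does not vanish and inverts, or concludes via Theorem \ref{thm12} that $D$ is a GPI-algebra. You instead take a single partial difference, freeze the second variable at a point $y_0$ chosen (using non-GPI-ness) so that $\widetilde G(X)=G(X+y_0)-G(X)\ne 0$, and run a degree-lowering induction; your base case coincides with the paper's $s=0$ case together with the final step of its Case 2. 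Both arguments invoke Martindale's theorem at the same two junctures (to produce a good evaluation point, and to dispose of the non-additive part of the right-hand side), but yours avoids the multilinearization machinery entirely. Two cosmetic points: the induction should formally be a strong induction, since $\deg\,\widetilde G(X)$ can be anything up to $s-1$; and your reduction to the non-GPI case at the outset is exactly the right way to make the repeated evaluations legitimate.

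One correction to your closing remark: the full symmetrization of the top homogeneous part does \emph{not} carry a scalar factor $s!$ in $D\{X_1,\ldots,X_s\}$ --- the $s!$ permuted generalized monomials are pairwise distinct there, so the multilinearization is nonzero in every characteristic (this is precisely Remark \ref{remark1}(iv), on which the paper relies). The obstruction you describe would appear only upon re-substituting $X_i=X$ for all $i$, which neither argument does. So the paper's one-shot linearization does not actually run into the difficulty your induction is designed to sidestep; the two proofs are two valid executions of the same underlying idea.
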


\begin{proof}
Let $s:=\deg\,G(X)$.
We first consider the case $s=0$. Then $G(X)=b$ for some $b\in D^\times$.
Thus $f(x)=b^{-1}H(x)$ for all $x\in D$. By the additivity of $f$, we get $H(0)=0$.

If $\deg\,H(X)\leq 1$, then there exist finitely many $a_i, b_i\in D$ such that $H(X)=\sum_ia_iXb_i$.
Thus $f$ is an elementary operator. Assume next that $\deg\,H(X)>1$. In view of Remark \ref{remark1},
$$
T(X, Y):=H(X+Y)-H(X)-H(Y)\ne 0
$$
in $D\{X, Y\}$. By the additivity of $f$, $T(X, Y)$ is a nontrivial GPI for $D$ and hence $D$ is a GPI-algebra.
It follows from Theorem \ref{thm12} that $[D\colon Z(D)]<\infty$.

We consider the case $s:=\deg\,G(X)\geq 1$.
Write
$$
G(X)=G_0(X)+G_1(X),
$$
where $G_1(X)$ is the homogeneous part of $G(X)$ of degree $s$. Set
$$
A(x):=G_0(x)f(x)\ \ \text{\rm and}\ \ \ B(x):=G_1(x)f(x)
$$
for all $x\in D$. Since $\deg\, G_0(X)<s$, we have
$$
A^{(s+1)}(x_1,\ldots,x_{s+1})=0
$$
and
$$
B^{(s+1)}(x_1,\ldots,x_{s+1})=\sum_{j=1}^{s+1}{G_1}^{(s)}(x_1,\cdots,\widehat{x_j},\cdots,x_{s+1})f(x_j)
$$
for all $x_i\in D$. Since $G(x)f(x)=H(x)$ for all $x\in D$, we get
\begin{eqnarray}
\sum_{j=1}^{s+1}{G_1}^{(s)}(x_1,\cdots,\widehat{x_j},\cdots,x_{s+1})f(x_j)=H^{(s+1)}(x_1,\cdots,x_{s+1})
\label{eq:100}
\end{eqnarray}
for all $x_i\in D$. In view of Remark \ref{remark1}, all ${G_1}^{(s)}(X_1,\cdots,\widehat{X_j},\cdots,X_{s+1})$'s are multilinear, and
$$
{G_1}^{(s)}(X_1,\cdots,X_{s})\ne 0
$$
since ${G_1}(X)\ne 0$ and $\deg\,{G_1}(X)=s$. Hence we can rewrite Eq.\eqref{eq:100} as
\begin{eqnarray}
&&{G_1}^{(s)}(x_1,\cdots,x_s)f(x_{s+1})\nonumber\\
&=&\sum_jb_j(x_1,\cdots,x_s)x_{s+1}c_j(x_1,\cdots,x_s)+H^{(s+1)}(x_1,\cdots,x_{s+1}),
\label{eq:101}
\end{eqnarray}
for all $x_i\in D$, where $b_j, c_j$ are generalized monomials in $x_1,\cdots,x_s, f(x_1),\cdots,f(x_s)$.\vskip4pt

Case 1:\ ${G_1}^{(s)}(x_1,\cdots,x_s)=0$ for all $x_1,\cdots,x_s\in D$. Then $D$ is a division GPI-algebra.
It follows from Theorem \ref{thm12} that $[D\colon Z(D)]<\infty$. We are done in this case.

Case 2:\ ${G_1}^{(s)}(z_1,\cdots,z_s)\ne 0$ for some $z_1,\cdots,z_s\in D$. We let
$$
\widetilde{b_j}:=b_j(z_1,\cdots,z_s)\ \ \text{\rm and}\ \ \widetilde{c_j}:=c_j(z_1,\cdots,z_s)
$$
for all $j$.
In view of Eq.\eqref{eq:101}, we have
\begin{eqnarray}
f(x_{s+1})=\sum_j{G_1}^{(s)}(z_1,\cdots,z_s)^{-1}\widetilde{b_j}x_{s+1}\widetilde{c_j}+E(x_{s+1}),
\label{eq:102}
\end{eqnarray}
for all $x_{s+1}\in D$, where
$$
E(X_{s+1}):={G_1}^{(s)}(z_1,\cdots,z_s)^{-1}H^{(s+1)}(z_1,\cdots,z_{s}, X_{s+1})\in D\{X_{s+1}\}.
$$

Suppose first that $E(X_{s+1})$ is linear in $X_{s+1}$.
Hence, by Eq.\eqref{eq:102}, $f$ is an elementary operator. Suppose next that $E(X_{s+1})$ has degree greater than $1$. By the additivity of $f$, it follows from Eq.\eqref{eq:102} that $D$ satisfies the GPI
$$
E(X_{s+1}+X_{s+2})-E(X_{s+1})-E(X_{s+2}).
$$
By Remark \ref{remark1}, $E(X_{s+1}+X_{s+2})-E(X_{s+1})-E(X_{s+2})\ne 0$.
It follows from Theorem \ref{thm12} that $D$ is finite-dimensional over its center $Z(D)$.
\end{proof}

\begin{cor} \label{cor9}
  Let $D$ be a division ring, and let $f_1,\ldots,f_s\colon D\to D$ be additive maps
  satisfying
\begin{eqnarray}
  \sum_{j=1}^sG_j(x_1,\ldots,x_s)f_j(x_j)=H(x_1,\ldots,x_s)
  \label{eq:27}
\end{eqnarray}
  for all $x_i\in D$, where $G_j(X_1,\ldots,X_s), H(X_1,\ldots,X_s) \in D\{X_1,\ldots,X_s\}$ for $j=1,\ldots,s$.
 Suppose that $[D\colon Z(D)]=\infty$. Given $j$, if $G_j(X_1,\ldots,X_s)\ne 0$, then $f_j$ is an elementary operator.
 \end{cor}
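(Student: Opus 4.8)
The plan is to freeze all variables except $X_j$ in Eq.~\eqref{eq:27} and then invoke Theorem~\ref{thm11}. Fix $j$ with $G_j(X_1,\ldots,X_s)\ne 0$. The first (and only delicate) step is to choose the frozen values so that $G_j$ is not destroyed: I claim there exist $z_i\in D$, $i\ne j$, with
\[
\widetilde{G}(X_j):=G_j(z_1,\ldots,z_{j-1},X_j,z_{j+1},\ldots,z_s)\ne 0
\]
in $D\{X_j\}$. The partial substitution $X_i\mapsto z_i$ ($i\ne j$) is a $Z(D)$-algebra homomorphism $D\{X_1,\ldots,X_s\}\to D\{X_j\}$, so $\widetilde{G}(X_j)$ always lies in $D\{X_j\}$. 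If no such tuple $(z_i)_{i\ne j}$ existed, then $\widetilde{G}(X_j)=0$ for every choice of $z_i\in D$, and evaluating further at $X_j=z_j\in D$ would give $G_j(z_1,\ldots,z_s)=0$ for all $z_1,\ldots,z_s\in D$; thus $G_j$ would be a nontrivial GPI for $D$, forcing $[D\colon Z(D)]<\infty$ by Theorem~\ref{thm12}, contrary to hypothesis. Hence such $z_i$ can be fixed.

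Next I would substitute $x_i=z_i$ for all $i\ne j$ in Eq.~\eqref{eq:27}. For $i\ne j$ the value $f_i(z_i)$ is a fixed element of $D$, so each summand $G_i(z_1,\ldots,z_{j-1},x_j,z_{j+1},\ldots,z_s)f_i(z_i)$ is the value at $x_j$ of an element of $D\{X_j\}$, and likewise for $H(z_1,\ldots,z_{j-1},x_j,z_{j+1},\ldots,z_s)$. Therefore Eq.~\eqref{eq:27} reduces to
\[
\widetilde{G}(x_j)\,f_j(x_j)=\widetilde{H}(x_j)\qquad\text{for all }x_j\in D,
\]
where
\[
\widetilde{H}(X_j):=H(z_1,\ldots,X_j,\ldots,z_s)-\sum_{i\ne j}G_i(z_1,\ldots,X_j,\ldots,z_s)\,f_i(z_i)\in D\{X_j\},
\]
the $z$'s occupying every slot other than the $j$-th.

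Finally, since $\widetilde{G}(X_j)\ne 0$ and $\widetilde{G},\widetilde{H}\in D\{X_j\}$, Theorem~\ref{thm11} applies to the additive map $f_j$: either $[D\colon Z(D)]<\infty$ or $f_j$ is an elementary operator. As $[D\colon Z(D)]=\infty$ by hypothesis, we conclude that $f_j$ is an elementary operator, as desired.

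As anticipated, essentially all of the content sits in the first paragraph — that a nonzero generalized polynomial over an infinite-dimensional division ring cannot be annihilated by \emph{every} specialization of the other variables — and this is immediate from Martindale's theorem, since such a $D$ admits no nontrivial GPI in any number of variables. The remainder is bookkeeping: substitution is a ring homomorphism into $D\{X_j\}$, and the constants $f_i(z_i)$, $i\ne j$, are simply absorbed into the right-hand side $\widetilde{H}$.
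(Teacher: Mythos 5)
Your proof is correct and follows essentially the same route as the paper: both arguments use Martindale's theorem to conclude that $G_j$ is not a GPI for $D$ (so some specialization of the other variables leaves a nonzero element of $D\{X_j\}$), absorb the constants $f_i(z_i)$ into the right-hand side, and then apply Theorem~\ref{thm11}. The only cosmetic difference is that the paper picks a point where $G_j(a_1,\ldots,a_s)\ne 0$ directly, whereas you argue by contraposition that some partial specialization must be nonzero; these are the same observation.
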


\begin{proof}
We may assume without loss of generality that $j=1$ and $G_1(X_1,\ldots,X_s)\ne 0$.
We claim that $f_1$ is an elementary operator.
 If $G_1(X_1,\ldots,X_s)$ is a GPI for $D$, it follows from Theorem \ref{thm12} that $D$ is finite-dimensional over its center, a contradiction. Thus $G_1(X_1,\ldots,X_s)$ is not a GPI for $D$. Then
$G_1(a_1,\ldots,a_s)\ne 0$ for some $a_i\in D$. By Eq.\eqref{eq:27}, we have
\begin{eqnarray}
G_1(x_1,a_2,\ldots,a_s)f(x_1)=- \sum_{j=2}^sG_j(x_1,a_2,\ldots,a_s)f_j(a_j)+H(x_1,a_2,\ldots,a_s)
  \label{eq:28}
\end{eqnarray}
  for all $x_1\in D$.
  Note that
  $$
  G_1(X_1,a_2,\ldots,a_s), - \sum_{j=2}^sG_j(X_1,a_2,\ldots,a_s)f_j(a_j)+H(X_1,a_2,\ldots,a_s)\in D\{X_1\},
  $$
and $G_1(X_1,a_2,\ldots,a_s)\ne 0$. Since $[D\colon Z(D)]=\infty$, it follows from
Theorem \ref{thm11} that $f_1$ is an elementary operator.
\end{proof}

The following extends Theorem \ref{thm11} to a more general form.

\begin{thm} \label{thm15}
  Let $D$ be a division ring, and let $f_i\colon D\to D$ be  additive maps, $1\leq i\leq s$,
  satisfying
 \begin{eqnarray}
  \sum_{i=1}^nG_i(x)f_i(x)=H(x)
   \label{eq:32}
\end{eqnarray}
   for all $x\in D$, where $0\ne G_i(X), H(X)\in D\{X\}$ for all $i$. Suppose that $[D\colon Z(D)]=\infty$.
 If $\deg\,G_i(X)\ne \deg\,G_j(X)$ for $i\ne j$, then all $f_i$ are elementary operators.
 \end{thm}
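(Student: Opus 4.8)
The plan is to argue by induction on $n$, at each step removing the summand $G_i(x)f_i(x)$ whose coefficient has the largest degree and reducing to a shorter identity of the same shape. Relabel so that $d_1<d_2<\cdots<d_n$, where $d_i:=\deg\,G_i(X)$; the hypothesis guarantees these integers are strictly increasing. When $n=1$ the claim is immediate from Theorem~\ref{thm11}: since $[D\colon Z(D)]=\infty$ the other alternative there is excluded, so $f_1$ is an elementary operator.

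Assume now $n\geq 2$. Since distinct non-negative integers include at most one zero, $d_n\geq 1$. Write $G_n(X)=G_{n,0}(X)+G_{n,1}(X)$, where $G_{n,1}(X)\neq 0$ is the homogeneous component of degree $d_n$ and $\deg\,G_{n,0}(X)\leq d_n-1$. Apply the $(d_n+1)$-fold linearization operator $(\cdot)^{(d_n+1)}$ to both sides of Eq.~\eqref{eq:32}. For each $i<n$ the map $x\mapsto G_i(x)f_i(x)$ is a finite sum of products of at most $d_i+1\leq d_n$ values of additive maps (copies of the identity together with one copy of $f_i$), so, as recalled in the paragraph before Theorem~\ref{thm11}, its $(d_n+1)$-st linearization is zero; the same holds for $x\mapsto G_{n,0}(x)f_n(x)$, which involves at most $d_n$ such values. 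Performing the surviving computation exactly as for $B^{(s+1)}$ in the proof of Theorem~\ref{thm11}, Eq.~\eqref{eq:32} yields
\[
\sum_{j=1}^{d_n+1}{G_{n,1}}^{(d_n)}(x_1,\ldots,\widehat{x_j},\ldots,x_{d_n+1})\,f_n(x_j)=H^{(d_n+1)}(x_1,\ldots,x_{d_n+1})
\]
for all $x_i\in D$, where $H^{(d_n+1)}(X_1,\ldots,X_{d_n+1})\in D\{X_1,\ldots,X_{d_n+1}\}$; this last term may well be nonzero if $\deg\,H$ is large, but that will not matter.

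I now view this as an instance of the hypothesis of Corollary~\ref{cor9} in $d_n+1$ variables: take each of the additive maps there to be $f_n$, the coefficient preceding $f_n(x_j)$ to be ${G_{n,1}}^{(d_n)}(X_1,\ldots,\widehat{X_j},\ldots,X_{d_n+1})$, and the right-hand side to be $H^{(d_n+1)}(X_1,\ldots,X_{d_n+1})$. For $j=d_n+1$ this coefficient is ${G_{n,1}}^{(d_n)}(X_1,\ldots,X_{d_n})$, which is nonzero by Remark~\ref{remark1}(iv) because $G_{n,1}(X)\neq 0$, $\deg\,G_{n,1}(X)=d_n\geq 1$ and $G_{n,1}(0)=0$. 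Since $[D\colon Z(D)]=\infty$, Corollary~\ref{cor9} tells us that $f_n$ is an elementary operator, say $f_n(x)=\sum_k a_k x b_k$ for all $x\in D$. Then $G_n(x)f_n(x)=\sum_k G_n(x)a_k x b_k$ is the value at $x$ of the generalized polynomial $\sum_k G_n(X)a_k X b_k\in D\{X\}$, so transferring it to the right of Eq.~\eqref{eq:32} produces $\sum_{i=1}^{n-1}G_i(x)f_i(x)=H'(x)$ for all $x\in D$ with $H'(X)\in D\{X\}$, the $G_1(X),\ldots,G_{n-1}(X)$ remaining nonzero with pairwise distinct degrees. The induction hypothesis makes $f_1,\ldots,f_{n-1}$ elementary operators, and together with $f_n$ this completes the proof.

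The one step requiring genuine care is the linearization: one must check that at level $d_n+1$ every contribution except the one produced by the leading homogeneous part $G_{n,1}$ of $G_n$ (together with the $H$-term) is annihilated — this is exactly where the strict inequalities among the $d_i$ are used — and then recognize the resulting identity as precisely the situation handled by Corollary~\ref{cor9}. The appearance of a possibly high-degree term $H^{(d_n+1)}$ on the right might look like an obstruction, but it is harmless, since Corollary~\ref{cor9} imposes no condition on the right-hand side.
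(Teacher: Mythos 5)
Your proposal is correct and follows essentially the same route as the paper's own proof: isolate the summand whose coefficient has the largest degree, linearize at level $d_n+1$ so that all lower-degree contributions vanish, invoke Corollary~\ref{cor9} via the nonzero multilinear component ${G_{n,1}}^{(d_n)}(X_1,\ldots,X_{d_n})$ to conclude $f_n$ is elementary, and then absorb $G_n(x)f_n(x)$ into the right-hand side to induct on $n$. No gaps; the observation that the possibly high-degree term $H^{(d_n+1)}$ is harmless is exactly the point of Corollary~\ref{cor9}.
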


 \begin{proof}
 We proceed the proof by induction on $n$. The case $n=1$ is proved by Theorem \ref{thm11}. Suppose that $n>1$
 and the conclusion holds for $n-1$.
 Let $s_i:=\deg\,G_i(X)$ for $1\leq i\leq n$. We may assume that $s_n>s_i$ for $1\leq i\leq n-1$. Write
 $$
G_n(X)=E_0(X)+E_1(X)
$$
where $E_1(X)$ is the homogeneous part of $G_n(X)$ of degree $s_n$. Since $s _n+1>s_i+1$ for $1\leq i\leq n-1$
applying the same argument given in the proof of Theorem \ref{thm11}, we get
$$
\sum_{j=1}^{s_n+1}{E_1}^{(s_n)}(x_1,\cdots,\widehat{x_j},\cdots,x_{s_n+1})f_n(x_j)=H^{(s_n+1)}(x_1,\cdots,x_{s_n+1})
$$
for all $x_i\in D$. Note that ${E_1}^{(s_n)}(X_1,\cdots,X_{s_n})\ne 0$. It follows from Corollary \ref{cor9} that $f_n$ is an elementary operator.

Thus there exist finitely many $a_i, b_i\in D$ such that
$f_n(x)=\sum_ia_ixb_i$
for all $x\in D$. By Eq.\eqref{eq:32},
 \begin{eqnarray*}
  \sum_{i=1}^{n-1}G_i(x)f_i(x)=H(x)-G_n(x)\sum_ia_ixb_i
 \end{eqnarray*}
   for all $x\in D$. Note that $H(X)-G_n(X)\sum_ia_iXb_i\in D\{X\}$. By the inductive hypothesis, we obtain that $f_1,\ldots,f_{n-1}$ are
 elementary operators.
 \end{proof}

\section{The identity $f(x^2)=w(x)f(x)$}

The identity $f(x^2)=w(x)f(x)$ appears normally in the fifth section.

\begin{lem}\label{lem10}
Let $D$ be a division ring of characteristic not $2$, and let $f, g\colon D\to D$ be additive maps.
Suppose that
$f(x^2)=w(x)g(x)$
for all $x\in D$, where $w\colon D\to D$ is a map. Then
\begin{eqnarray*}
&&\big(2w(2x)-w(x+y)-w(x-y)-2w(x)\big)g(x)\nonumber\\
&=&\big(w(x+y)-w(x-y)-2w(y)\big)g(y)
\end{eqnarray*}
for all $x, y\in D$.
\end{lem}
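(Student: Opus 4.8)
The plan is to run the additivity of $f$ and of $g$ through two identities that hold in every ring, namely $(x+y)^2+(x-y)^2=2x^2+2y^2$ and $(2x)^2=4x^2$, and then to combine the two resulting relations. Since $w$ is merely a map, with no additivity available, it must be carried along as an opaque symbol throughout; this is essentially the only point to watch, and there is no genuine obstacle here — the whole argument is formal bookkeeping.

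First I would apply $f$ to $(x+y)^2+(x-y)^2=2x^2+2y^2$. By additivity of $f$ this gives $f\big((x+y)^2\big)+f\big((x-y)^2\big)=2f(x^2)+2f(y^2)$; rewriting each term via the hypothesis $f(z^2)=w(z)g(z)$ and expanding $g(x\pm y)=g(x)\pm g(y)$ yields $\big(w(x+y)+w(x-y)\big)g(x)+\big(w(x+y)-w(x-y)\big)g(y)=2w(x)g(x)+2w(y)g(y)$, and after rearranging the terms in $g(x)$ and $g(y)$ one obtains
\[
\big(w(x+y)+w(x-y)-2w(x)\big)g(x)=\big(2w(y)-w(x+y)+w(x-y)\big)g(y).\qquad(\star)
\]
Next I would apply $f$ to $(2x)^2=4x^2$: using $g(2x)=2g(x)$ on the left and additivity of $f$ on the right, this produces $2w(2x)g(x)=w(2x)g(2x)=f\big((2x)^2\big)=4f(x^2)=4w(x)g(x)$, i.e. $2w(2x)g(x)=4w(x)g(x)$.

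Finally, in the left-hand side $\big(2w(2x)-w(x+y)-w(x-y)-2w(x)\big)g(x)$ of the identity to be proved I would substitute $2w(2x)g(x)=4w(x)g(x)$; it collapses to $\big(2w(x)-w(x+y)-w(x-y)\big)g(x)=-\big(w(x+y)+w(x-y)-2w(x)\big)g(x)$, and then $(\star)$ turns this into $-\big(2w(y)-w(x+y)+w(x-y)\big)g(y)=\big(w(x+y)-w(x-y)-2w(y)\big)g(y)$, which is exactly the right-hand side. (In fact $\mathrm{char}\,D\ne2$ plays no role in this particular lemma; it is a standing hypothesis of the section and is needed elsewhere.)
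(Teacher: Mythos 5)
Your proof is correct and is in substance the same polarization argument as the paper's: the paper reaches the identity by first computing $f(xy+yx)$ from $(x+y)^2-x^2-y^2$ and then substituting $(x,y)\mapsto(x+y,x-y)$, which amounts to exactly the linear combination of the two square identities $(x+y)^2+(x-y)^2=2x^2+2y^2$ and $(2x)^2=4x^2$ that you use directly. Your observation that $\mathrm{char}\,D\ne 2$ is not actually needed here is also accurate (in characteristic $2$ the asserted identity degenerates to $0=0$).
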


\begin{proof}
Let $x, y\in D$. We compute
\begin{eqnarray}
f(xy+yx)&=&f((x+y)^2-x^2-y^2)\nonumber\\
              &=&w(x+y)g(x+y)-w(x)g(x)-w(y)g(y)\nonumber\\
              &=&\big(w(x+y)-w(x)\big)g(x)+\big(w(x+y)-w(y)\big)g(y).
              \label{eq:20}
\end{eqnarray}
Replacing $(x, y)$ by $(x+y, x-y)$ in Eq.\eqref{eq:20}, we get
\begin{eqnarray*}
&&f((x+y)(x-y)+(x-y)(x+y))\\
&=&\big(w(2x)-w(x+y)\big)g(x+y)+\big(w(2x)-w(x-y)\big)g(x-y)\\
                                          &=&\big(2w(2x)-w(x+y)-w(x-y))\big)g(x)+\big(w(x-y)-w(x+y))\big)g(y).
\end{eqnarray*}
On the other hand,
\begin{eqnarray*}
&&f((x+y)(x-y)+(x-y)(x+y))\\
&=&2f(x^2)-2f(y^2)\\
&=&2w(x)g(x)-2w(y)g(y).
\end{eqnarray*}
Comparing the above two equations, we get
\begin{eqnarray*}
&&\big(2w(2x)-w(x+y)-w(x-y)-2w(x)\big)g(x)\nonumber\\
&=&\big(w(x+y)-w(x-y)-2w(y)\big)g(y),
\end{eqnarray*}
as desired.
\end{proof}

The following gives an application of Corollary \ref{cor9}.

\begin{thm}\label{thm14}
Let $D$ be a division ring of characteristic not $2$, and let $f, g\colon D\to D$ be nonzero additive maps.
Suppose that
$f(x^2)=w(x)g(x)$
for all $x\in D$, where $0\ne w(X)\in D\{X\}$. If $\deg\,w(X)>1$, then either $D$ is finite-dimensional over $Z(D)$ or $g$ is an elementary operator.
\end{thm}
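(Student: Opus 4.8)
I would start from the identity of Lemma~\ref{lem10}, which rewrites the hypothesis $f(x^2)=w(x)g(x)$ as
\[
\big(2w(2x)-w(x+y)-w(x-y)-2w(x)\big)g(x)=\big(w(x+y)-w(x-y)-2w(y)\big)g(y)
\]
for all $x,y\in D$. The point of this identity is that it is linear in $g$ on each side, with coefficients that are generalized polynomials, so it is exactly the sort of identity Corollary~\ref{cor9} is designed to handle. Treating $x$ and $y$ as two independent variables, set
\[
G_1(X,Y):=2w(2X)-w(X+Y)-w(X-Y)-2w(X),\qquad G_2(X,Y):=w(X+Y)-w(X-Y)-2w(Y),
\]
so that $G_1(x,y)g(x)=G_2(x,y)g(y)$, i.e.\ $G_1(x,y)g(x)-G_2(x,y)g(y)=0$ for all $x,y\in D$. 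Since $g$ is nonzero (so in particular not the trivial elementary operator in a way that would obstruct the argument) and $[D:Z(D)]=\infty$ is the case to rule out, I want to apply Corollary~\ref{cor9} to conclude that $g$ is an elementary operator provided $G_2(X,Y)\neq 0$ in $D\{X,Y\}$.

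The crux is therefore to show that \textbf{if $\deg w(X)>1$ then $G_2(X,Y)=w(X+Y)-w(X-Y)-2w(Y)\neq 0$} in $D\{X,Y\}$ (or else to handle directly the degenerate case where it vanishes). Here is the idea. Write $w(X)=\sum_{k} w_k(X)$ with $w_k$ the homogeneous component of degree $k$, and let $s:=\deg w(X)\ge 2$ so $w_s\neq 0$. The degree-$s$ homogeneous part of $G_2(X,Y)$ is $w_s(X+Y)-w_s(X-Y)-2w_s(Y)$ (the $-2w(Y)$ contributes $-2w_s(Y)$, but the top-degree part of $w_s(X+Y)-w_s(X-Y)$ already has degree $s$, so all of these sit in degree $s$). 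Expand $w_s(X+Y)-w_s(X-Y)$ multilinearly: the terms with an even number of $Y$'s cancel and the terms with an odd number of $Y$'s double. In particular the part that is degree $1$ in $Y$ and degree $s-1$ in $X$, call it $2\,w_s^{Y,1}(X,Y)$, survives; and $-2w_s(Y)$ (pure degree $s$ in $Y$) does not interfere with it unless $s=1$. Since $w_s\neq 0$ and $s\ge 2$, Remark~\ref{remark1}(iv)-type reasoning (the full polarization $w_s^{(s)}$ is nonzero and multilinear, hence every partial polarization, in particular the one keeping one $Y$-slot and collapsing the rest to $X$, is nonzero) shows $w_s^{Y,1}(X,Y)\neq 0$ in $D\{X,Y\}$. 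Thus $G_2(X,Y)$ has a nonzero homogeneous component and so $G_2(X,Y)\neq 0$.

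With $G_2(X,Y)\neq 0$ established, Corollary~\ref{cor9} applied to the identity $G_1(x,y)g(x)-G_2(x,y)g(y)=0$ (with the roles: the coefficient of $g(y)$ is $-G_2\ne 0$) yields directly that $g$ is an elementary operator, unless $[D:Z(D)]<\infty$; in either case the conclusion of the theorem holds. \textbf{The main obstacle} I anticipate is the bookkeeping in the previous paragraph: one must be careful that no cancellation occurs between the $-2w(Y)$ term and the mixed terms of $w(X+Y)-w(X-Y)$, and that a genuinely nonzero multilinear partial polarization is isolated — this is where Remark~\ref{remark1} and the nonvanishing of full polarizations of a nonzero homogeneous generalized polynomial of degree $\ge 2$ are used. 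A secondary, but routine, point is to confirm that the hypothesis ``$g$ nonzero'' is not actually needed for this step (it is used elsewhere) — Corollary~\ref{cor9} gives the elementary-operator conclusion for $g$ regardless, as long as its coefficient $G_2$ is a nonzero generalized polynomial and $[D:Z(D)]=\infty$.
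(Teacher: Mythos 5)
Your proposal is correct and follows the same skeleton as the paper's proof: both start from Lemma \ref{lem10} and both feed the resulting identity $G_1(x,y)g(x)-G_2(x,y)g(y)=0$ into Corollary \ref{cor9}. Where you diverge is in how the nondegeneracy of the coefficients is established. You prove directly that $G_2(X,Y)=w(X+Y)-w(X-Y)-2w(Y)\ne 0$ by isolating the bidegree-$(s-1,1)$ component of its top homogeneous part, which survives because the even-$Y$ terms cancel, the odd-$Y$ terms double, and $-2w_s(Y)$ lives in bidegree $(0,s)$ with $s\ge 2$. The paper instead argues by contradiction: if $g$ is not an elementary operator, Corollary \ref{cor9} forces \emph{both} coefficients to vanish identically in $D\{X,Y\}$, and then two easy substitutions ($X=0$, then $Y=X$) turn the pair of identities into $w(X+Y)+w(X-Y)=2w(X)$ and $w(X+Y)-w(X-Y)=2w(Y)$, whence $w(X+Y)=w(X)+w(Y)$, contradicting Remark \ref{remark1}(i). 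The paper's route completely sidesteps the polarization bookkeeping you flag as the main obstacle; your route proves the stronger statement that $G_2$ alone is already nonzero.

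One point in your argument deserves tightening: the step ``the full polarization $w_s^{(s)}$ is nonzero and multilinear, hence every partial polarization is nonzero'' is not a valid deduction as stated, since specializing $X_2=\cdots=X_s=X$ in $w_s^{(s)}$ recovers the bidegree-$(s-1,1)$ component only up to the factor $(s-1)!$, which may vanish in characteristic $p$. The claim you need is nevertheless true and easy to prove directly: the bidegree-$(s-1,1)$ part of $w_s(X+Y)$ splits according to the position of the single $Y$, each positional component is the image of $w_s$ under the canonical linear isomorphism between the degree-$s$ homogeneous component of $D\{X\}$ and the corresponding shape component of $D\{X,Y\}$ (both identified with $D^{\otimes(s+1)}$ over $Z(D)$), and hence is nonzero whenever $w_s\ne 0$. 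With that repair your proof is complete; your closing observation that the hypothesis ``$g$ nonzero'' is not needed for this step is also correct.
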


\begin{proof}
Assume that $\deg\,w(X)>1$ and $[D\colon Z(D)]=\infty$.
By Remark \ref{remark1}, we have
$$
w(X+Y)-w(X)-w(Y)\in D\{X, Y\}\setminus \{0\}.
$$
It follows from Lemma \ref{lem10} that
\begin{eqnarray*}
&&\big(2w(2x)-w(x+y)-w(x-y)-2w(x)\big)g(x)\nonumber\\
&=&\big(w(x+y)-w(x-y)-2w(y)\big)g(y).
\end{eqnarray*}
We claim that $g$ is an elementary operator.
Otherwise, it follows from Corollary \ref{cor9} that
\begin{eqnarray}
2w(2X)-w(X+Y)-w(X-Y)-2w(X)=0
\label{eq:24}
\end{eqnarray}
and
\begin{eqnarray}
w(X+Y)-w(X-Y)-2w(Y)=0.
\label{eq:25}
\end{eqnarray}
Replacing $X$ by $0$ in Eq.\eqref{eq:24}, we get $w(-Y)=-w(Y)$, which implies $w(0)=0$ since ${\text{\rm char}}\,D\ne 2$.
Replacing $Y$ by $X$ in Eq.\eqref{eq:24}, we get $2w(2X)-w(2X)-w(0)-2w(X)=0$ and so $w(2X)=2w(X)$. Hence we can rewrite Eq.\eqref{eq:24} as
\begin{eqnarray}
w(X+Y)+w(X-Y)=2w(X).
\label{eq:26}
\end{eqnarray}
It follows from Eq.\eqref{eq:25} and Eq.\eqref{eq:26} that $w(X+Y)=w(X)+w(Y)$, a contradiction.
\end{proof}

The following is a consequence of \cite[Theorem 2(a)]{Martindale1969}.

\begin{lem}\label{lem13}
Let $D$ be a division ring, and let $\{a_1,\ldots,a_s\}$ and $\{b_1,\ldots,b_s\}$ be two independent subsets of $D$ over $Z(D)$.
Then $\sum_{i=1}^sa_iXb_i\ne 0$ in $D\{X\}$.
\end{lem}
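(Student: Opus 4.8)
The plan is to reduce the formal statement in $D\{X\}$ to an ordinary generalized polynomial identity and then quote Martindale's linear-independence criterion directly. The underlying observation is that a formal relation in $D\{X\}$ is preserved by every substitution $X\mapsto x$, and for a generalized polynomial that is linear in $X$ such specializations see everything: Martindale's theorem converts the functional consequence back into a statement about the coefficients.

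First I would dispose of the degenerate bookkeeping: the assertion is meaningful only for $s\geq 1$, and in that range the hypothesis that $\{a_1,\ldots,a_s\}$ (respectively $\{b_1,\ldots,b_s\}$) is independent over $Z(D)$ forces each $a_i\neq 0$ (respectively each $b_i\neq 0$). Now suppose, towards a contradiction, that $\sum_{i=1}^s a_iXb_i=0$ in $D\{X\}$. Since $D\{X\}$ is the free product of $D$ and $Z(D)[X]$ over $Z(D)$, for every $x\in D$ there is a $Z(D)$-algebra homomorphism $D\{X\}\to D$ that is the identity on $D$ and sends $X$ to $x$; applying it to the supposed relation yields $\sum_{i=1}^s a_ixb_i=0$ for all $x\in D$. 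Viewing $D$ as a prime ring whose Martindale quotient ring is $D$ itself and whose extended centroid is $Z(D)$, and using that $\{b_1,\ldots,b_s\}$ is linearly independent over $Z(D)$, Martindale's Theorem 2(a) in \cite{Martindale1969} forces $a_i=0$ for all $i$, contradicting $a_1\neq 0$. (Symmetrically, one could instead feed the independence of $\{a_1,\ldots,a_s\}$ into the theorem and conclude each $b_i=0$; either route gives the contradiction.)

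I do not expect a genuine obstacle here: the substantive content is Martindale's theorem, which the excerpt already permits us to cite, and the remaining work is merely the remark that a formal identity specializes to a functional one. For completeness I would mention the purely formal alternative that sidesteps Martindale entirely: the homogeneous degree-one component of $D\{X\}$ is isomorphic, as a $Z(D)$-bimodule, to $D\otimes_{Z(D)}D$ via $aXb\mapsto a\otimes b$; after extending $\{a_1,\ldots,a_s\}$ to a $Z(D)$-basis of $D$ and splitting $D\otimes_{Z(D)}D$ into the corresponding direct sum of subspaces $a\otimes_{Z(D)}D$, the element $\sum_i a_i\otimes b_i$ has the entry $a_i\otimes b_i$ in the summand $a_i\otimes_{Z(D)}D$, which is nonzero because $b_i\neq 0$. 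In the paper I would present the Martindale argument, since the surrounding text is phrased that way, and note the tensor-product argument only parenthetically.
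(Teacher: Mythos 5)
Your argument is correct and is precisely the route the paper intends: the paper states this lemma without proof, simply as ``a consequence of \cite[Theorem 2(a)]{Martindale1969}'', and your reduction --- evaluate the formal identity at each $x\in D$ via the canonical homomorphism $D\{X\}\to D$, then invoke Martindale's linear-independence criterion, using independence of $\{a_1,\ldots,a_s\}$ only to guarantee some $a_i\neq 0$ --- supplies exactly the intended details. Your parenthetical tensor-product argument is also valid (and more elementary), but the Martindale citation is what the paper has in mind.
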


\begin{lem}\label{lem12}
Let $D$ be a division ring, and let $w(X)\in D\{X\}$ with $\deg\,w(X)>1$.
Suppose that $\{a_1,\ldots,a_s\}$ and $\{b_1,\ldots,b_s\}$ are two independent subsets of $D$ over $Z(D)$.
Then
$$
\sum_{i=1}^sa_iX^2b_i-\sum_{i=1}^sw(X)a_iXb_i\ne 0.
$$
\end{lem}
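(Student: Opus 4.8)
The plan is to isolate the top-degree homogeneous component of
\[
F(X) := \sum_{i=1}^{s} a_i X^{2} b_i - \sum_{i=1}^{s} w(X)\,a_i X b_i \in D\{X\}
\]
and to show it is nonzero with the help of Lemma~\ref{lem13}. Set $d := \deg w(X)$, so $d \ge 2$ by hypothesis, and let $w_d(X)$ denote the homogeneous part of $w(X)$ of degree $d$; by the very definition of degree, $w_d(X) \ne 0$. Since $D\{X\}$ is graded by degree, I would read off the homogeneous component of $F(X)$ of degree $d+1$: the summand $\sum_i a_i X^{2} b_i$ is homogeneous of degree $2$ and contributes nothing because $d+1 \ge 3$, whereas the degree-$(d+1)$ part of $\sum_i w(X)a_iXb_i$ is precisely $\sum_i w_d(X)a_iXb_i$. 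Hence it suffices to prove that
\[
\sum_{i=1}^{s} w_d(X)\,a_i\,X\,b_i \ne 0 \quad\text{in } D\{X\}.
\]

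For this I would work with the standard $Z(D)$-basis of $D\{X\}$ induced by a $Z(D)$-basis $\{e_\lambda\}$ of $D$ containing $1$, namely the monomials $e_{\lambda_0}Xe_{\lambda_1}X\cdots Xe_{\lambda_t}$. Grouping the degree-$d$ monomials of $w_d(X)$ according to their initial segment up to and including the last occurrence of $X$, I would write $w_d(X) = \sum_{k=1}^{r} p_k(X)\,c_k$, where the $p_k(X)$ are pairwise distinct ``prefixes'' of the form $e_{\lambda_0}Xe_{\lambda_1}X\cdots Xe_{\lambda_{d-1}}X$ and $0 \ne c_k \in D$, with $r \ge 1$ because $w_d(X) \ne 0$. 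Then $\sum_i w_d(X)a_iXb_i = \sum_{k=1}^{r} p_k(X)\bigl(\sum_{i=1}^{s}(c_k a_i)\,X\,b_i\bigr)$. From the basis description, every basis monomial occurring in $p_k(X)\bigl(\sum_i (c_k a_i)Xb_i\bigr)$ begins with the segment $p_k(X)$, so distinct values of $k$ contribute disjoint sets of basis monomials; moreover, left multiplication by a prefix $p_k(X)$, which is a composition of left multiplications by $X$ and by the nonzero (hence invertible) elements $e_\lambda$, is injective on $D\{X\}$. Consequently the displayed sum vanishes only if $\sum_{i=1}^{s}(c_k a_i)\,X\,b_i = 0$ for every $k$.

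It then remains to pick an index $k$ with $c_k \ne 0$: since $c_k$ is a unit of $D$ and $\{a_1,\dots,a_s\}$ is independent over $Z(D)$, the set $\{c_k a_1,\dots,c_k a_s\}$ is again independent over $Z(D)$; together with the independence of $\{b_1,\dots,b_s\}$, Lemma~\ref{lem13} gives $\sum_{i=1}^{s}(c_k a_i)Xb_i \ne 0$, a contradiction. Hence $\sum_i w_d(X)a_iXb_i \ne 0$, and therefore $F(X) \ne 0$, as claimed.

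The step I expect to be the main obstacle is the middle one: making the prefix decomposition $w_d(X)=\sum_k p_k(X)c_k$ precise, checking that it is compatible with multiplication in the free product, and verifying that no cancellation can take place among the blocks indexed by distinct prefixes. This is exactly where one must argue carefully with the explicit $Z(D)$-basis of $D\{X\}$ and use that $D$ is a division ring, so that every nonzero coefficient is invertible; everything else reduces to a routine degree count and a single appeal to Lemma~\ref{lem13}.
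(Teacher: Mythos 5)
Your proof is correct, and its first step coincides with the paper's: both isolate the homogeneous component of degree $\deg w(X)+1$, note that $\sum_i a_iX^2b_i$ cannot interfere because $\deg w(X)+1\geq 3$, and thereby reduce the claim to the non-vanishing of $w_d(X)\sum_i a_iXb_i$ where $w_d$ is the top homogeneous part of $w$. Where you diverge is in how that non-vanishing is established. The paper simply quotes Cohn's theorem that $D\{X\}$ is a domain, so that $w_d(X)\ne 0$ and $\sum_i a_iXb_i\ne 0$ (the latter by Lemma~\ref{lem13}) immediately force the product to be nonzero. You instead prove the needed instance of the domain property by hand: you decompose $w_d(X)=\sum_k p_k(X)c_k$ along distinct prefixes ending in $X$, observe that the blocks $p_k(X)\bigl(\sum_i (c_ka_i)Xb_i\bigr)$ involve pairwise disjoint sets of basis monomials, use the invertibility of the $c_k$ to see that $\{c_ka_1,\dots,c_ka_s\}$ stays independent, and then invoke Lemma~\ref{lem13} for the sets $\{c_ka_i\}$ and $\{b_i\}$. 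Your route is more self-contained (it avoids the external citation at the price of explicit bookkeeping with the $Z(D)$-basis of the free product), while the paper's is shorter; both arguments are sound and end in the same appeal to Lemma~\ref{lem13}.
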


\begin{proof}
Let $\ell:=\deg\,w(X)>1$. Write
$w(X)=\sum_{j=0}^\ell w_j(X)$,
where $w_j(X)$ is the homogeneous part of $w(X)$ of degree $j$, $j=0,\ldots,\ell$.  Thus $w_\ell(X)\ne 0$.
Suppose on the contrary that
$$
\sum_{i=1}^sa_iX^2b_i-\sum_{i=1}^sw(X)a_iXb_i=0.
$$
Then
$$
\sum_{i=1}^sa_iX^2b_i-\sum_{j=0}^\ell \big(w_j(X)\sum_{i=1}^sa_iXb_i\big)=0.
$$
Thus, by $\ell>1$, $w_\ell(X)\sum_{i=1}^sa_iXb_i=0$. Since $D\{X\}$ is a domain (see \cite[Corollary, p.379]{Cohn1968}),
we have $\sum_{i=1}^sa_iXb_i=0$. This is a contradiction since $\sum_{i=1}^sa_iXb_i\ne 0$ (see Lemma \ref{lem13}).
\end{proof}

The following will be used later.

\begin{cor}\label{cor3}
Let $D$ be a division ring of characteristic not $2$, and let $f\colon D\to D$ be a nonzero additive map.
Suppose that
$f(x^2)=w(x)f(x)$
for all $x\in D$, where $0\ne w(X)\in D\{X\}$. If $\deg\, w(X)>1$, then $D$ is finite-dimensional over $Z(D)$.
\end{cor}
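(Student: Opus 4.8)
The plan is to reduce the single-map identity $f(x^2)=w(x)f(x)$ (with $g=f$) to Theorem \ref{thm14}, and then kill the remaining alternative "$f$ is an elementary operator" by a GPI argument that invokes Lemma \ref{lem12}. So suppose, for contradiction, that $[D\colon Z(D)]=\infty$. Since $\deg\,w(X)>1$ and $f$ is nonzero, Theorem \ref{thm14} (applied with $g=f$) forces $f$ to be an elementary operator: there are finitely many $a_i,b_i\in D$ with $f(x)=\sum_{i=1}^s a_ixb_i$ for all $x\in D$. After discarding redundant terms we may assume $\{a_1,\ldots,a_s\}$ and $\{b_1,\ldots,b_s\}$ are each $Z(D)$-independent, and $s\geq 1$ because $f\neq 0$.

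Now substitute this explicit form into $f(x^2)=w(x)f(x)$: for all $x\in D$,
$$
\sum_{i=1}^s a_i x^2 b_i \;=\; w(x)\sum_{i=1}^s a_i x b_i .
$$
Hence $D$ satisfies the generalized polynomial identity $\sum_{i=1}^s a_i X^2 b_i - \sum_{i=1}^s w(X) a_i X b_i$. By Lemma \ref{lem12}, this element of $D\{X\}$ is nonzero, so it is a nontrivial GPI for $D$; thus $D$ is a GPI-algebra. By Theorem \ref{thm12}, $[D\colon Z(D)]<\infty$, contradicting our assumption. Therefore $D$ is finite-dimensional over $Z(D)$.

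The only place requiring a little care is the normalization step that lets us assume the coefficient sets $\{a_i\}$ and $\{b_i\}$ are $Z(D)$-independent, so that Lemma \ref{lem12} applies verbatim; this is the standard "shortest representation" trick for elementary operators, and since $f\neq 0$ the representation is nonempty after the reduction. Everything else is a direct chain of citations — Theorem \ref{thm14} to get the elementary form, Lemma \ref{lem12} to certify nontriviality of the resulting GPI, and Theorem \ref{thm12} to conclude finite-dimensionality — so there is no genuine obstacle beyond bookkeeping.
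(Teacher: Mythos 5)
Your proof is correct and follows essentially the same route as the paper's: apply Theorem \ref{thm14} to reduce to the elementary-operator case, take a minimal-length representation so that the coefficient sets are $Z(D)$-independent, and then invoke Lemma \ref{lem12} together with Theorem \ref{thm12} to conclude finite-dimensionality. The only cosmetic difference is that you frame the argument as a contradiction rather than a case analysis.
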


\begin{proof}
Assume that $\deg\, w(X)>1$. By Theorem \ref{thm14}, either $D$ is finite-dimensional over its center $Z(D)$ or $f$ is an elementary operator.
We assume that $f$ is an elementary operator.
There exist finitely many $a_i, b_i\in D$, $i=1,\ldots,s$, such that $f(x)=\sum_{i=1}^sa_ixb_i$ for all $x\in D$. We can choose $s$ to be minimal. Then $\{a_1,\ldots,a_s\}$ and $\{b_1,\ldots,b_s\}$ are two independent sets over $Z(D)$. Since
$f(x^2) =w(x)f(x)$ for all $x \in D$, we get
$\sum_{i=1}^sa_ix^2b_i=\sum_{i=1}^sw(x)a_ixb_i$
for all $x\in D$. Since $\deg\,w(X)\geq 2$, it follows from Lemma \ref{lem12} that
$$
\sum_{i=1}^sa_iX^2b_i-\sum_{i=1}^sw(X)a_iXb_i
$$
is a nontrivial GPI for $D$. Applying Theorem \ref{thm12}, we get $[D\colon Z(D)]<\infty$.
\end{proof}

\section{A preliminary result}
In this section we prove a preliminary result on the identity $f(x) = x^n g(x^{-1})$.
  Let $R$ be a ring and $I$ be an ideal of $R$. For $x \in R$, we write $\overbar{x} := x + I \in R/I$.

  \begin{thm} \label{thm2}
    Let $R$ be an algebra over a field $F$ and $n \ne 2$ be an integer. Suppose that $f, g \colon R \rightarrow R$ are additive maps satisfying
    \begin{eqnarray} \label{eq:1}
      f(x) = x^n g(x^{-1})
    \end{eqnarray}
    for all $x \in R^{\times}$. Then $f = g=0$ on $R^{\times}$ except when ${\text{\rm char}}\,F = p >0$ with $p-1 \mid n-2$.
  \end{thm}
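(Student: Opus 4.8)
The plan is to exploit the substitution $x \mapsto \lambda x$ for $\lambda$ ranging over the prime subfield of $F$, because additivity already determines $f(\lambda x)$ on the nose, and comparing this with what Eq.\eqref{eq:1} forces will give a relation of the form $(\lambda^{n-2}-1)f(x)=0$. Everything then reduces to knowing for which $\lambda$ the scalar $\lambda^{n-2}-1$ is nonzero, which is where the divisibility $p-1\mid n-2$ enters.

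First I would observe that since $f,g$ are additive and $R$ is an $F$-algebra, both maps are linear over the prime field $P$ of $F$ (so $P=\mathbb{Q}$ if $\operatorname{char}F=0$ and $P=\mathbb{F}_p$ if $\operatorname{char}F=p$): $f(kx)=kf(x)$ for $k\in\mathbb{Z}$ is additivity, and when $k$ is invertible in $F$ one divides to get $f((1/k)x)=(1/k)f(x)$; likewise for $g$. Every $\lambda\in P^{\times}$ is central in $R$ and acts invertibly. Now fix $\lambda\in P^{\times}$ and $x\in R^{\times}$. On one hand $f(\lambda x)=\lambda f(x)$ by $P$-linearity; on the other hand, using $(\lambda x)^{-1}=\lambda^{-1}x^{-1}$, $(\lambda x)^{n}=\lambda^{n}x^{n}$, and $P$-linearity of $g$,
\[
f(\lambda x)=(\lambda x)^{n}g\big((\lambda x)^{-1}\big)=\lambda^{n}x^{n}g(\lambda^{-1}x^{-1})=\lambda^{n-1}x^{n}g(x^{-1})=\lambda^{n-1}f(x).
\]
Subtracting the two expressions gives $\lambda(\lambda^{n-2}-1)f(x)=0$, and since $\lambda$ is a unit, $(\lambda^{n-2}-1)f(x)=0$ for all $\lambda\in P^{\times}$ and all $x\in R^{\times}$.

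It remains to run the case analysis. If $\operatorname{char}F=0$, take $\lambda=2$: since $n\ne 2$ we have $2^{n-2}-1\ne 0$, a nonzero element of $\mathbb{Q}\subseteq F$, hence invertible, so $f(x)=0$ for all $x\in R^{\times}$. If $\operatorname{char}F=p$ and $p-1\nmid n-2$, then because $\mathbb{F}_p^{\times}$ is cyclic of order $p-1$, a generator $\gamma$ satisfies $\gamma^{\,n-2}\ne 1$; thus $\gamma^{\,n-2}-1$ is a nonzero element of $\mathbb{F}_p\subseteq F$, hence invertible, and again $f(x)=0$ for all $x\in R^{\times}$. In either covered case, substituting back into Eq.\eqref{eq:1} yields $0=f(x)=x^{n}g(x^{-1})$, and since $x^{n}$ is invertible we get $g(x^{-1})=0$ for all $x\in R^{\times}$, i.e.\ $g=0$ on $R^{\times}$; alternatively one may note that applying Eq.\eqref{eq:1} to $x^{-1}$ gives $g(x)=x^{n}f(x^{-1})$, so the roles of $f$ and $g$ are symmetric. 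The only remaining possibility is $\operatorname{char}F=p$ with $p-1\mid n-2$, which is precisely the stated exception.

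The computation itself is elementary; the only points needing a little care are the prime-field linearity of the additive maps and the bookkeeping that it is exactly the arithmetic of $\mathbb{F}_p^{\times}$—equivalently the divisibility $p-1\mid n-2$—that decides when the scalar $\lambda^{n-2}-1$ can be forced to be nonzero. I do not expect a genuine obstacle at this stage: Theorem~\ref{thm2} is a reduction step, and the substantive work (handling the exceptional small characteristics, and passing from vanishing on $R^{\times}$ to vanishing on a division ring) will be carried out in the later sections.
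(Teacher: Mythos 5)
Your proposal is correct and follows essentially the same route as the paper: substitute $x\mapsto kx$ for a suitable scalar $k$ in the prime field (namely $k=2$ in characteristic $0$, and a generator of $(\mathbb{Z}/p\mathbb{Z})^{\times}$ in characteristic $p$ with $p-1\nmid n-2$), use additivity to extract $k(k^{n-2}-1)f(x)=0$, and conclude $f=0$ and then $g=0$ on $R^{\times}$. The only difference is presentational: you make the prime-field linearity of $f$ and $g$ explicit, which the paper leaves implicit.
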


  \begin{proof}
    If ${\text{\rm char}}\,F =0$, then we let $k:=2$. If ${\text{\rm char}}\,F = p >0$ with $p-1 \nmid n-2$, then we choose a positive integer $k$ such that $\overbar{k}$ generates the cyclic multiplicative group $(\mathbb{Z} / p \mathbb{Z})^{\times}$. Since $p-1 \nmid n-2$, we have $k^{n-2}-1 \ne 0$ in $F$. Both of the two cases satisfy $k(k^{n-2}-1) \ne 0$ in $F$. Replacing $x$ by $kx$ in Eq.\eqref{eq:1}, we have
    \begin{eqnarray*}
      kf(x) &=& f(kx) \\
            &=& k^n x^n g(k^{-1}x^{-1}) \\
            &=& k^{n-1} x^n g(x^{-1}) \\
            &=& k^{n-1}f(x)
    \end{eqnarray*}
    for all $x \in R^{\times}$, implying $k(k^{n-2}-1)f(x)=0$ for all $x \in R^{\times}$. Hence $f = 0$ on $R^{\times}$, and so $g = 0$ on $R^{\times}$ by Eq.\eqref{eq:1}.
  \end{proof}

 In view of Theorem \ref{thm2}, we have the following consequence, which reduces the study of Theorem \ref{thm10} to the case that
 ${\text{\rm char}}\,D = p >0$ and $p-1 \mid n-2$.

 \begin{thm} \label{thm3}
 Let $D$ be a division ring and $n \ne 2$ be an integer. Suppose that $f,g \colon D \rightarrow D$ are additive maps satisfying
 $f(x) = x^n g(x^{-1})$ for all $x \in D^{\times}$. Then $f = g = 0$ except when ${\text{\rm char}}\,D = p >0$ with $p-1 \mid n-2$.
 \end{thm}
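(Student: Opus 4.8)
The plan is to derive Theorem~\ref{thm3} as a direct corollary of Theorem~\ref{thm2}. The bridge is the observation that every division ring $D$ is an algebra over a field, namely over its center $Z(D)$: the prime subfield of $D$ (which is $\mathbb{Q}$ when $\mathrm{char}\,D = 0$, and $\mathbb{F}_p$ when $\mathrm{char}\,D = p > 0$) is contained in $Z(D)$, and $D$ is naturally a $Z(D)$-algebra. In particular $\mathrm{char}\,Z(D) = \mathrm{char}\,D$, so the exceptional arithmetic condition ``$\mathrm{char}\,F = p > 0$ with $p - 1 \mid n - 2$'' of Theorem~\ref{thm2} transfers verbatim to ``$\mathrm{char}\,D = p > 0$ with $p - 1 \mid n - 2$''.

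With this in hand, first I would apply Theorem~\ref{thm2} with $R = D$ and $F = Z(D)$. Since $f, g \colon D \to D$ are additive and satisfy $f(x) = x^n g(x^{-1})$ for all $x \in D^{\times}$, the hypotheses of Theorem~\ref{thm2} hold, and we conclude that $f = g = 0$ on $D^{\times}$, except possibly when $\mathrm{char}\,D = p > 0$ with $p - 1 \mid n - 2$. Then, to promote ``$f = g = 0$ on $D^{\times}$'' to ``$f = g = 0$'', I would use that in a division ring $D^{\times} = D \setminus \{0\}$, together with the elementary fact that an additive map vanishes at $0$; hence $f$ and $g$ vanish on all of $D$.

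I do not expect any genuine obstacle here: Theorem~\ref{thm3} is essentially the specialization of Theorem~\ref{thm2} to division rings, and the only point meriting an explicit sentence is the verification that $D$ is an algebra over the field $Z(D)$ with $\mathrm{char}\,Z(D) = \mathrm{char}\,D$, so that the division-ring statement inherits precisely the same exceptional case. (As already noted in the remark following Theorem~\ref{thm2}, this reduces the remaining analysis of Theorem~\ref{thm10} to the situation $\mathrm{char}\,D = p > 0$ with $p - 1 \mid n - 2$.)
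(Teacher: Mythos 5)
Your proposal is correct and matches the paper's intent exactly: the paper presents Theorem~\ref{thm3} as an immediate consequence of Theorem~\ref{thm2} (applied with $R=D$ viewed as an algebra over its center, or over its prime subfield), with the passage from ``$f=g=0$ on $D^{\times}$'' to ``$f=g=0$ on $D$'' following from $D^{\times}=D\setminus\{0\}$ and the additivity of $f$ and $g$. No further comment is needed.
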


We provide a series of examples to show that the restriction on the characteristic in Theorem \ref{thm2} is necessary.

  \begin{examp}
    \begin{enumerate} [{\rm (i)}]
      \item Let $R = \mathbb{Z}/2\mathbb{Z}$ and $f(x)=g(x)=x$ for all $x \in R$. Then it is easy to see that $f$ and $g$ satisfy Eq.\eqref{eq:1} for any integer $n$.
      \item Let $R = \mathbb{Z}/p\mathbb{Z}$, where $p$ is a prime number, and $f(x)=g(x)=x$ for all $x \in R$. Then $x^p = x$ for all $x \in R$. Let $k \in \mathbb{Z}$ and $n = (p-1)k+2$. Then
          $$
          x^n g(x^{-1}) = x^{pk-k+2} x^{-1} = x = f(x)
          $$
          for all $x \ne 0$. Here $n$ ranges over all integers such that $p-1 \mid n-2$.
      \item Let $R$ be a finite field of order $p^k$, where $p$ is a prime number and $k$ is a positive integer, and $f(x)=x$, $g(x)=x^{p^{\ell}}$ for all $x \in R$, where $\ell$ is a non-negative integer. Then $x^{p^k} = x$ for all $x \in R$. If $n = p^{\ell}+p^k$, then
          $$
          x^n g(x^{-1}) = x^{p^{\ell}+p^k} x^{-p^{\ell}} = x^{p^k} = x = f(x)
          $$
          for all $x \ne 0$.
      \item Let $R$ be an infinite field of characteristic $p>0$ (e.g., the algebraic closure of $\mathbb{Z}/p\mathbb{Z}$), and $f(x)=\alpha x^{p^{\ell}} + \beta x^{p^{m}}$, $g(x)=\alpha x^{p^{m}} + \beta x^{p^{\ell}}$ for all $x \in R$, where $\ell,m$ are positive integers and $\alpha, \beta \in R$. If $n = p^{\ell}+p^{m}$, then
          $$
          x^n g(x^{-1}) = x^{p^{\ell}+p^{m}} (\alpha x^{-p^{m}} + \beta x^{-p^{\ell}}) = \alpha x^{p^{\ell}} + \beta x^{p^{m}} = f(x)
          $$
          for all $x \in R^{\times}$.
      \item Let $F$ be a field of characteristic $p>0$ and $R = F\{X,Y\}$. Then $R^{\times} = F$. Let $n$ be an integer and $f,g \colon R \rightarrow R$ be nonzero additive maps such that $f|_F$, $g|_F$ and $n$ are as in each of {\rm (i)--(iv)}. Then
          $$
          f(x) = x^n g(x^{-1})
          $$
          for all $x \in F = R^{\times}$. In this case, $R$ is a noncommutative domain, which is not a division ring.
              \end{enumerate}
              \label{example1}
  \end{examp}

\section{The identity $f(x) = x^n g(x^{-1})$}

The aim of this section is to solve  Eq.\eqref{eq:17}. We extend Catalano and Merch\'{a}n's result in 2023  to get a complete solution.

 \begin{thm} \label{thm10}
 Let $D$ be a noncommutative division ring with ${\text{\rm char}}\,D \ne 2$. Let $f, g \colon D \rightarrow D$ be additive maps satisfying
  $$
  f(x) = x^n g(x^{-1})
  $$
  for all $x \in D^{\times}$, where $n \ne 2$ is a positive integer. Then $f = 0$ and $g = 0$.
\end{thm}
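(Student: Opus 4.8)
\textbf{Proof proposal for Theorem \ref{thm10}.}

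By Theorem \ref{thm3}, I may assume $\text{\rm char}\,D = p > 0$ with $p-1 \mid n-2$; in particular the problematic small cases are confined to positive characteristic. The plan is to first dispose of the case $[D\colon Z(D)] < \infty$ and then handle the infinite-dimensional case using the elementary-operator machinery of Section 2. For the finite-dimensional case, one expects to reduce, after passing to a splitting field or using that $D$ is a crossed product, to a matrix algebra and then exploit the additivity of $f,g$ together with the identity on units; alternatively one can try to show directly that the identity $f(x)=x^n g(x^{-1})$ forces $f,g$ to be $Z(D)$-linear combinations of power maps $x\mapsto x^{p^k}$ (as Example \ref{example1} suggests is the only obstruction over infinite fields), and then rule those out when $D$ is genuinely noncommutative. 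The key leverage is that $x^n g(x^{-1})$ must be additive in $x$ on $D^\times$, which is a strong constraint.

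The main work is the infinite-dimensional case. Here the strategy is to manipulate the identity into the form handled by Theorem \ref{thm11} and Corollary \ref{cor3}. Starting from $f(x) = x^n g(x^{-1})$ for $x \in D^\times$, replace $x$ by $x^{-1}$ to get $f(x^{-1}) = x^{-n} g(x)$, hence $g(x) = x^n f(x^{-1})$; so $f$ and $g$ play symmetric roles. Multiplying through, $x^{-n} f(x) = g(x^{-1})$, and applying the relation again, one derives a relation of the shape $G(x) f(x) = H(x)$ or $f(x^2) = w(x) f(x)$ after suitable substitutions (e.g. replacing $x$ by $x+1$ or combining the identities for $x$ and $x^2 = x\cdot x$). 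Concretely, from $f(x)=x^n g(x^{-1})$ and $g(x)=x^n f(x^{-1})$ one gets $f(x) = x^n f((x^{-1})^{-1} \cdot \text{stuff})$... the precise route is to produce, for $x\in D^\times$, an identity $f(x^2) = w(x) f(x)$ with $w(X) \in D\{X\}$ of degree $n$ (namely $w(x) \sim x^n$ up to the inverse bookkeeping), valid on a Zariski-dense set and hence, by the GPI theory, as an identity; since $n \geq 1$ and $n \neq 2$, if $\deg w > 1$ then Corollary \ref{cor3} gives $[D\colon Z(D)] < \infty$, contradicting our case assumption, so $w$ is linear, and then Theorem \ref{thm11} forces $f$ to be an elementary operator.

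Once $f$ (and symmetrically $g$) is an elementary operator, say $f(x) = \sum_i a_i x b_i$ with $\{a_i\}$, $\{b_i\}$ independent over $Z(D)$, substitute back into $f(x) = x^n g(x^{-1})$ and clear the inverse by multiplying by an appropriate power of $x$: this yields a genuine generalized polynomial identity for $D$ (both sides lying in $D\{X\}$), which by Theorem \ref{thm12} is impossible for an infinite-dimensional $D$ unless the identity is trivial, i.e. $f = 0$; then $g = 0$ follows. The hard part, and the step I expect to be the main obstacle, is the finite-dimensional case: there the GPI machinery gives nothing, and one must instead run a direct argument on $\M_m(\Delta)$ (for a division ring $\Delta$ finite over its center), carefully using $\text{\rm char}\,D \neq 2$, the hypothesis $n \neq 2$, and the noncommutativity. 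A workable plan there is: evaluate the identity on rank-one idempotents and on $1 + e_{ij}$ type units to pin down the "block structure" of $f$, show $f$ must be of the form $x \mapsto qx$ plus (twisted) derivation-like pieces, and then observe that the power $n \neq 2$ kills the derivation piece (as in Theorem \ref{thm13} and its predecessors), while a central-character count eliminates the remaining piece; finally extend from the units to all of $D$ by additivity since units additively span $D$.
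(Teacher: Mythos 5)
Your overall framing (reduce via Theorem \ref{thm3} to characteristic $p$ with $p-1\mid n-2$, then aim for an identity of the form $f(x^2)=w(x)f(x)$ and invoke Corollary \ref{cor3}) does touch the endgame of the paper's argument, but the proposal has several genuine gaps and one outright error, so it does not constitute a proof.

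First, the pivotal identity $f(x^2)=w(x)f(x)$ is never actually derived. In the paper it comes from Hua's identity $a-aba=(a^{-1}+(b^{-1}-a)^{-1})^{-1}$, applied only after one has proved $g=-f$ (Proposition \ref{pro8}); and proving $g=-f$ is itself a substantial step, using Lemma \ref{lem14}, the number-theoretic Lemma \ref{lem3} on binomial coefficients, Jacobson's theorem that a nonzero polynomial over $\mathbb{Z}/p\mathbb{Z}$ cannot vanish identically on a noncommutative division ring, and a PI/Posner argument. Your ``replace $x$ by $x^{-1}$ and multiply through'' sketch produces none of this. Second, your claim that once $f,g$ are elementary operators one can ``clear the inverse by multiplying by an appropriate power of $x$'' to obtain a GPI is false in a noncommutative ring: if $g(y)=\sum_i a_iyb_i$, then $x^ng(x^{-1})=\sum_i x^na_ix^{-1}b_i$, and the inner $x^{-1}$ is trapped between noncommuting coefficients, so no power of $x$ removes it; one is left with a rational identity, not an element of $D\{X\}$ to which Theorem \ref{thm12} applies. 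Third, and most seriously, you explicitly defer the finite-dimensional case to a ``workable plan'' on $\M_m(\Delta)$ via rank-one idempotents, which is never executed; this is precisely where you admit the main obstacle lies. The paper never performs such a matrix computation: instead it shows (Lemmas \ref{lem5}, \ref{lem9}, \ref{lem7}) that a nonzero $f$ would force $Z(D)$ to be finite --- using a Vandermonde argument and Kaplansky's theorem when $n=2p^{\ell}$ --- and then Corollary \ref{cor3} together with Wedderburn's theorem on finite division rings gives the contradiction. The operative case split is by the arithmetic shape of $n$ (Cases I and II under $(\ddag)$), not by $\dim_{Z(D)}D$, and the centralizer/Jacobson/Kaplansky/Wedderburn machinery that actually closes the argument is absent from your proposal.
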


\begin{remark}\label{remark2}
Theorem \ref{thm10} is in general not true if $D$ is only a noncommutative domain, not a division ring. Example \ref{example1} (v) provides such an example.
\end{remark}

Let $D$ be a division ring of characteristic not $2$. If $n = 1$ in Eq.\eqref{eq:17} , then $f = g = 0$ by Theorem \ref{thm3}. The case $n = 2$  has been completely characterized by Dar and Jing \cite{Dar2023} (see Theorem \ref{thm13}). According to Theorem \ref{thm3}, in this section we always assume the following:\ \
$$
(\ddag)\ \ \ \ \ \ \ \ \ \ \ \ \ {\text{\rm char}}\,D = p > 2,\ n>2\ \text{\rm and}\ p-1 \mid n-2.
$$

Therefore, to study Theorem \ref{thm10} we divide our arguments into the following two cases:\vskip6pt

{\bf Case I}:\ $n = p^{\ell}k$, where $\ell \geq 0$, $k>1$ and $\text{\rm gcd}(p,k) = 1$, and $k-1$ is not a non-negative power of $p$;

{\bf Case II}:\ $n = p^{\ell+m}+p^{\ell}$ for some integers $\ell \geq 0$ and $m \geq 0$ with $(\ell,m)\ne (0,0)$.\vskip6pt

The following observation plays a key role for Case I.

\begin{lem} \label{lem3}
  Let $k>1$ be a positive integer and $p>2$ be a prime number. Suppose that $\text{\rm gcd}(p,k) = 1$ and $k-1$ is not a non-negative power of $p$. Let $m \geq 0$ be the largest integer such that $p^m \mid k-1$. Then the following hold:

  (i)\ \ $p \nmid \binom{k}{p^m+1}$;

(ii)\ \ $P(X):= 2\sum\limits_{ \substack{ t=2 \\ t \colon even}}^{k-2} \binom{k}{t} X^{p^{\ell}t} \in (\mathbb{Z}/p\mathbb{Z})[X]\setminus \{0\}$.
\end{lem}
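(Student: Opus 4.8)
The plan is to reduce both parts to a single base‑$p$ digit computation, via Lucas' theorem (or Kummer's theorem on carries). Write $k-1=p^m u$ with $p\nmid u$, which is possible since $m$ is the largest exponent with $p^m\mid k-1$; note $u\ge 2$ precisely when $k-1$ is not a power of $p$. From $k=1+p^m u$ one reads off the base‑$p$ expansion of $k$: the $0$th digit is $1$, the digits in positions $1,\dots,m-1$ are $0$, and the $m$th digit $k_m$ equals $u\bmod p\ne 0$. When $m=0$ this degenerates to the statement that $k_0:=k\bmod p$ lies in $\{2,\dots,p-1\}$, since $p\nmid k$ and $p\nmid k-1$.

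For (i): the base‑$p$ digits of $p^m+1$ are $1$ in positions $0$ and $m$ and $0$ elsewhere (for $m=0$, the $0$th digit is $2<p$), and $p^m+1\le 1+p^m u=k$. Evaluating $\binom{k}{p^m+1}$ digit‑by‑digit via Lucas' theorem, every factor is $\binom{1}{1}=1$ or $\binom{0}{0}=1$ except the $m$th, which is $\binom{k_m}{1}=k_m\not\equiv 0\pmod p$; in the case $m=0$ the single factor is $\binom{k_0}{2}=\tfrac{k_0(k_0-1)}{2}\not\equiv 0\pmod p$, using $p>2$. Hence $p\nmid\binom{k}{p^m+1}$. (Kummer's theorem gives this directly: adding $p^m+1$ to $k-p^m-1=p^m(u-1)$ in base $p$ involves no carry, since $1+((u-1)\bmod p)\le p-1$.)

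For (ii): as $t$ ranges over the even integers in $\{2,\dots,k-2\}$ the monomials $X^{p^{\ell}t}$ are pairwise distinct, so $P(X)=0$ in $(\mathbb{Z}/p\mathbb{Z})[X]$ would force $2\binom{k}{t}\equiv 0$, hence (as $p>2$) $p\mid\binom{k}{t}$, for every such $t$. I would contradict this by exhibiting one admissible $t$, namely $t=p^m+1$: it is even since $p$, hence $p^m$, is odd; it has $p\nmid\binom{k}{t}$ by (i); and $2\le t\le k-2$ — the lower bound is immediate, and the upper bound holds once $k\ge 5$ because $u\ge 2$ gives $p^m\le(k-1)/2$, so $p^m+1\le(k+1)/2\le k-2$. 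The leftover values $k\in\{2,3,4\}$ are disposed of directly: $k=2$ is excluded because $k-1=1$ is a power of $p$; for $k=4$ one has $p\ge 5$ and $t=2=k-2$ is admissible with $\binom{4}{2}=6\not\equiv 0$; and $k=3$ cannot occur under the standing assumption $(\ddag)$, since $p-1\mid k-2=1$ would force $p=2$. Thus $P(X)\ne 0$.

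The one genuinely delicate step is this last range check — confirming that $t=p^m+1$ lands strictly inside the summation range of $P$ — as that is the unique point where the hypothesis that $k-1$ is not a power of $p$ (and, for the smallest exponents, the ambient divisibility $p-1\mid k-2$) is actually used; everything else is routine digit bookkeeping.
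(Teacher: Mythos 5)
Your proof is correct, but it takes a genuinely different route to part (i): you read the conclusion off from Lucas' theorem applied to the base-$p$ digits of $k=1+p^m u$ and $p^m+1$, whereas the paper proves (i) by induction on $m$, passing from $k$ to $k'=\frac{k-1}{p}+1$ and comparing $p$-adic multiplicities in the two product formulas for $\binom{k}{p^m+1}$ and $\binom{k'}{p^{m-1}+1}$. Your argument is shorter and makes transparent \emph{why} the hypotheses enter (the digit of $k$ in position $m$ is $u\bmod p\ne 0$, and the digit in position $0$ is $1$, resp.\ $k\bmod p\in\{2,\dots,p-1\}$ when $m=0$); the paper's induction is in effect a hand-made proof of the needed instance of Lucas/Kummer. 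For part (ii) both arguments exhibit the same witness $t=p^m+1$, but you are more careful on the range check: the paper asserts $p^m+1\le k-2$ directly from $p^m\ne k-1$, which in fact fails for $k=3$ (there the sum defining $P$ is empty, so statement (ii) as literally written is false for $k=3$, $p\ge 5$); you correctly observe that $k=3$ cannot occur in the paper's application, since under $(\ddag)$ with $n=p^{\ell}k$ the integer $k$ is even, and your bound $p^m+1\le(k+1)/2\le k-2$ for $k\ge 5$ together with the direct check at $k=4$ covers everything that actually arises. Two cosmetic quibbles: your parenthetical Kummer justification (``no carry since $1+((u-1)\bmod p)\le p-1$'') addresses only $m\ge 1$ --- for $m=0$ one adds $2$ in position $0$ and needs $p\nmid k$ as well --- but your main Lucas computation already handles $m=0$ separately; and strictly speaking the Lucas product has additional factors $\binom{k_i}{0}=1$ from the digits of $k$ above position $m$, which you implicitly suppress. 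Neither affects correctness.
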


\begin{proof}
 (i)\ Note that $1 = p^0$, and so $k-1 > 1$. Thus $k > 2$. We proceed the proof by induction on $m$.

 We first consider the case $m = 0$. Then $p \nmid k-1$, and it follows from $\text{\rm gcd}(p,k) = 1$ that $p\nmid k(k-1)$. So
  $  p \nmid \binom{k}{2} = \binom{k}{p^0+1}$.
  If $m = 1$, then $p^2 \nmid k-1$, and so obviously
  $$
  p \nmid \binom{k}{p+1}.
  $$
  Assume that $m>1$ and the conclusion holds for $m-1$. Let $k':=\dfrac{k-1}{p}+1$. Then
  $$
  p^{m-1} \mid \dfrac{k-1}{p} =k'-1
  $$
  and so $\text{\rm gcd}(p, k') = 1$. Moreover, we have
  $$
  p^m \nmid (\dfrac{k-1}{p}+1)-1=k'-1.
  $$
 Since $k-1$ is not a non-negative power of $p$, we have
 $ k'-1=\dfrac{k-1}{p}$,
 which  is not a non-negative power of $p$.  We compute
  \begin{eqnarray*}
    \binom{k'}{p^{m-1}+1}&=& \dfrac{(\dfrac{k-1}{p}+1)\dfrac{k-1}{p}(\dfrac{k-1}{p}-1)\cdots (\dfrac{k-1}{p}-p^{m-1}+1)}{(p^{m-1}+1)p^{m-1}(p^{m-1}-1)\cdots 1}  \\
                                                         &=&\dfrac{\prod_{j=-1}^{p^{m-1}-1}\big((k-1)-jp\big)}{\prod_{j=-1}^{p^{m-1}-1}(p^{m}-jp)}.\\                                                    \end{eqnarray*}
  By the inductive hypothesis, $p\nmid \binom{k'}{p^{m-1}+1}$ and so
  \begin{eqnarray} \label{eq:6}
    p \nmid \dfrac{\prod_{j=-1}^{p^{m-1}-1}\big((k-1)-jp\big)}{\prod_{j=-1}^{p^{m-1}-1}(p^{m}-jp)}.
  \end{eqnarray}
  Consider the following equation
  \begin{eqnarray} \label{eq:7}
    \binom{k}{p^m+1} = \dfrac{k(k-1)\cdots (k-p^m)}{(p^m+1)p^m(p^m-1)\cdots 2\cdot 1}
    =\dfrac{\prod_{j=-1}^{p^{m}-1}\big((k-1)-j\big)}{\prod_{j=-1}^{p^{m}-1}\big(p^{m}-j\big)}.
  \end{eqnarray}
  Note that $\prod_{j=-1}^{p^{m-1}-1}\big((k-1)-jp\big)$ and $\prod_{j=-1}^{p^{m}-1}\big((k-1)-j\big)$ have the same multiplicity of the prime factor $p$. Also, $\prod_{j=-1}^{p^{m-1}-1} (p^{m}-jp)$ and $\prod_{j=-1}^{p^{m}-1}\big(p^{m}-j\big)$ have the same multiplicity of the prime factor $p$.
 By  Eq.\eqref{eq:6} and Eq.\eqref{eq:7}, we get
  $p \nmid \binom{k}{p^m+1}$,
  as desired.

  (ii)\ \ Since $m \geq 0$ is the largest integer such that $p^m \mid k-1$. In this case, $p^m+1$ is even and $p^m+1 \leq k-2$ by $p^m \ne k-1$. By (i),  $p \nmid \binom{k}{p^m+1}$ and so $P(X) \ne 0$ in $(\mathbb{Z}/p\mathbb{Z})[X]$.
\end{proof}

We also need one more lemma.

\begin{lem} (\cite[Lemma 4.1(a)]{Catalano2023})
Let $D$ be a division ring, and let $n\geq 2$ be an integer. Let $f, g \colon D \rightarrow D$ be additive maps satisfying
$f(x) = x^n g(x^{-1})$ for all $x \in D^{\times}$. Then
$$
f(b)=(-(1-b)^n + b^n +1)f(1)-g(b)
$$
for all $b\in D$.
\label{lem14}
\end{lem}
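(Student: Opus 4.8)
The plan is to apply the defining relation $f(x)=x^{n}g(x^{-1})$ to a handful of well-chosen invertible arguments and then collapse everything using the additivity of $f$ and $g$, together with the observation $g(1)=f(1)$ obtained by putting $x=1$. First I would set aside the two degenerate values of $b$: for $b=0$ both sides of the asserted identity vanish, and for $b=1$ it reads $f(1)=2f(1)-g(1)$, which holds since $g(1)=f(1)$. So from now on assume $b\ne 0,1$; then $b$, $1-b$ and $b^{-1}-1$ are all units of $D$, and the computation below makes sense.

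The structural remark that neutralises the noncommutativity is that $b$, $b^{-1}$, $1-b$, $(1-b)^{-1}$ and $(b^{-1}-1)^{-1}$ all lie in the commutative subfield of $D$ generated by $b$; in particular $b^{-1}-1=(1-b)b^{-1}$ gives $(b^{-1}-1)^{-1}=b(1-b)^{-1}$, and powers of these elements may be moved past one another freely. With this understood I would make three substitutions into $f(x)=x^{n}g(x^{-1})$. Taking $x=b$ yields $g(b^{-1})=b^{-n}f(b)$. Taking $x=(b^{-1}-1)^{-1}$ yields $f\big((b^{-1}-1)^{-1}\big)=(b^{-1}-1)^{-n}g(b^{-1}-1)$, and additivity of $g$ turns the second factor into $g(b^{-1})-g(1)=b^{-n}f(b)-f(1)$. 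Taking $x=(1-b)^{-1}$ yields $g(1-b)=(1-b)^{n}f\big((1-b)^{-1}\big)$, while additivity of $g$ gives $g(1-b)=g(1)-g(b)=f(1)-g(b)$.

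The remaining ingredient is the elementary identity $(1-b)^{-1}=1+(b^{-1}-1)^{-1}$, obtained by clearing denominators: $(1-b)^{-1}-1=(1-b)^{-1}b=b(1-b)^{-1}=(b^{-1}-1)^{-1}$. Applying the additivity of $f$ to this identity and substituting the second computation above expresses $f\big((1-b)^{-1}\big)$ as a combination of $f(b)$ and $f(1)$; using $(b^{-1}-1)^{-n}=b^{n}(1-b)^{-n}$ and cancelling inside the commutative subfield, the coefficient of $f(b)$ becomes $(1-b)^{-n}$ and that of $f(1)$ becomes $1-b^{n}(1-b)^{-n}$. Feeding this back into $g(1-b)=(1-b)^{n}f\big((1-b)^{-1}\big)=f(1)-g(b)$ and multiplying out (again all factors commute) gives $f(1)-g(b)=(1-b)^{n}f(1)+f(b)-b^{n}f(1)$, and solving for $f(b)$ produces exactly $f(b)=\big(-(1-b)^{n}+b^{n}+1\big)f(1)-g(b)$.

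I expect the only thing needing real care to be the bookkeeping: choosing the three arguments so that the ``constant'' contributions $f(1)$ and $g(1)$ produced by additivity line up correctly, and checking that $b\ne 0,1$ is precisely the invertibility hypothesis used (so the two excluded values must be, and can be, verified by hand as above). Once one observes that every element manipulated lies in a single commutative subfield of $D$, the noncommutativity of $D$ poses no genuine obstacle, and neither the characteristic hypothesis nor the structure theorems of the earlier sections are needed here.
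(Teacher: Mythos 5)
Your argument is correct: the three substitutions $x=b$, $x=(b^{-1}-1)^{-1}$, $x=(1-b)^{-1}$, combined with $g(1)=f(1)$, the Hua-type identity $(1-b)^{-1}=1+(b^{-1}-1)^{-1}$, and the fact that all the elements involved lie in the commutative subfield generated by $b$ (so that $(b^{-1}-1)^{-n}b^{-n}=(1-b)^{-n}$), do yield $f(b)=\bigl(-(1-b)^n+b^n+1\bigr)f(1)-g(b)$, and your separate checks of $b=0$ and $b=1$ cover the excluded cases. The paper itself gives no proof of this lemma but simply cites Catalano and Merch\'an, and your derivation is essentially the standard one used there, so nothing further is needed.
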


\begin{lem} \label{lem1}
Let $D$ be a noncommutative division ring satisfying  ({$\ddag$}) and Case (I). Let $f,g \colon D \rightarrow D$ be additive maps satisfying
 $f(x) = x^n g(x^{-1})$
  for all $x \in D^{\times}$. Then $f(1) = 0$ and $f = -g$.
\end{lem}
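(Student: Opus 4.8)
The plan is to prove $f(1)=0$; the identity $f=-g$ then follows at once from Lemma \ref{lem14}. Put $q:=f(1)$ and $c(b):=-(1-b)^n+b^n+1$, so that Lemma \ref{lem14} reads $f(b)+g(b)=c(b)q$ for all $b\in D$. Since $f$ and $g$ are additive, replacing $b$ by $-b$ gives $-\big(f(b)+g(b)\big)=c(-b)q$; adding this to the original relation yields $\big(c(b)+c(-b)\big)q=0$ for every $b\in D$. Thus everything reduces to evaluating $c(b)+c(-b)$.

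Here is where $(\ddag)$ and Case (I) enter. Because ${\text{\rm char}}\,D=p$ is odd and $p-1\mid n-2$, the integer $n$ is even, and as $n=p^{\ell}k$ with $p$ odd this forces $k$ to be even; moreover $k\geq 4$, since $k-1$ being no non-negative power of $p$ excludes $k=2$. Using the Frobenius identity $(1-b)^{p^{\ell}}=1-b^{p^{\ell}}$ and writing $u:=b^{p^{\ell}}$, one gets $c(b)=-(1-u)^k+u^k+1=\sum_{t=1}^{k-1}\binom{k}{t}(-1)^{t+1}u^t$, the $t=0$ and $t=k$ terms cancelling because $k$ is even. Replacing $b$ by $-b$ sends $u$ to $-u$ (here $p^{\ell}$ is odd), so $c(-b)=-\sum_{t=1}^{k-1}\binom{k}{t}u^t$, and therefore $c(b)+c(-b)=-2\sum_{t \text{ even},\,2\le t\le k-2}\binom{k}{t}u^t=-P(b)$, with $P(X)$ exactly the polynomial appearing in Lemma \ref{lem3}(ii). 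Consequently $P(b)q=0$ for all $b\in D$.

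Now suppose $q\neq 0$. As $D$ is a division ring, $q$ is invertible, so $P(b)=0$ for all $b\in D$. By Lemma \ref{lem3}(ii), $P(X)$ is a nonzero element of $\mathbb{Z}/p\mathbb{Z}[X]\subseteq D\{X\}$, so $P(X)$ is a nontrivial GPI for $D$, whence $[D\colon Z(D)]<\infty$ by Theorem \ref{thm12}. Moreover every element of $D$ — in particular every element of $Z(D)$ — is a root of the fixed nonzero polynomial $P$, hence algebraic over the prime field with degree at most $\deg P$; thus $Z(D)$ is a finite field, and so $D$ is finite. By Wedderburn's little theorem $D$ is commutative, contradicting the hypothesis. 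Therefore $q=f(1)=0$, and then Lemma \ref{lem14} gives $f(b)=-g(b)$ for all $b\in D$, i.e. $f=-g$.

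The delicate part is the middle paragraph: one must carry $c(b)+c(-b)$ through the characteristic-$p$ manipulations so that it lands on precisely the polynomial $P(X)$ that Lemma \ref{lem3} was engineered to produce, and in particular confirm that $P$ is genuinely nonzero, which is exactly the content of Lemma \ref{lem3}(i). By contrast, the reduction to $f(1)=0$ via the substitution $b\mapsto -b$, and the closing finiteness/Wedderburn argument, are short; the one point worth double-checking is that the parity facts ($n$ even, $k$ even, $k\ge 4$) really are forced by $(\ddag)$ together with Case (I), since the whole computation rests on them.
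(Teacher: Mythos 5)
Your proof is correct and follows essentially the same route as the paper: substitute $b\mapsto -b$ in the identity from Lemma \ref{lem14}, add the two relations, and reduce to $P(a)f(1)=0$ with $P$ exactly the polynomial of Lemma \ref{lem3}(ii). The only (harmless) divergence is in the final contradiction: the paper invokes Jacobson's theorem directly to produce $a\in D$ with $P(a)\ne 0$, whereas you go through Martindale's GPI theorem, finiteness of $Z(D)$, and Wedderburn's theorem; both arguments are valid.
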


\begin{proof}
  Since $p$ is odd and $p-1 \mid n-2$, $n$ must be even, and so is $k$. Also, it follows from $ k-1 \ne p^0  = 1$ that $k>2$. By Lemma \ref{lem14},
  \begin{eqnarray} \label{eq:2}
    (f+g)(a) = (-(1-a)^n + a^n +1) f(1)
  \end{eqnarray}
  for all $a \in D$. Replacing $a$ by $-a$ in Eq.\eqref{eq:2}, we have
  \begin{eqnarray} \label{eq:3}
    -(f+g)(a) = (-(1+a)^n + a^n +1) f(1)
  \end{eqnarray}
  for all $a \in D$. Comparing Eq.\eqref{eq:2} and Eq.\eqref{eq:3}, we have
  \begin{eqnarray} \label{eq:4}
    0 = ((1+a)^n + (1-a)^n - 2a^n -2) f(1)
  \end{eqnarray}
  for all $a \in D$. Since $k > 2$, the equation Eq.\eqref{eq:4} becomes
  \begin{eqnarray*}
    2\Big( \sum_{\substack{ t=2 \\ t \colon even}}^{k-2} \binom{k}{t} a^{p^{\ell}t} \Big) f(1) = 0
  \end{eqnarray*}
  for all $a \in D$. Set $P(X):= 2\sum\limits_{ \substack{ t=2 \\ t \colon even}}^{k-2} \binom{k}{t} X^{p^{\ell}t} \in (\mathbb{Z}/p\mathbb{Z})[X]$. Then
  \begin{eqnarray} \label{eq:5}
    P(a) f(1) = 0
  \end{eqnarray}
  for all $a \in D$.  It follows from Lemma \ref{lem3} (ii) that $P(X) \ne 0$ in $(\mathbb{Z}/p\mathbb{Z})[X]$.
   Since $D$ is noncommutative, according to Jacobson's theorem \cite[Theorem 2, p. 183]{Jacobson1964} (or see \cite[Theorem 13.11]{Lam2001}), there exists $a \in D$ such that $P(a) \ne 0$.
Hence it follows from Eq.\eqref{eq:5} that $f(1) = 0$, and thus $f = -g$ by Eq.\eqref{eq:2}.
\end{proof}

\begin{lem} \label{lem2}
Let $D$ be a noncommutative division ring satisfying  ({$\ddag$}) and Case (II). Let $f,g \colon D \rightarrow D$ be additive maps satisfying
$f(x) = x^n g(x^{-1})$
for all $x \in D^{\times}$.  Then $f(1) = 0$ and $f = -g$.
\end{lem}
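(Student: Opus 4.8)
The plan is to follow the template of the proof of Lemma \ref{lem1}; the crucial new point is that over a \emph{noncommutative} division ring the maps $b\mapsto b^{p^e}$ fail badly to be additive, and this failure is itself a nonzero generalized polynomial.

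First I would apply Lemma \ref{lem14}, which gives $f(b)+g(b)=\big(-(1-b)^n+b^n+1\big)f(1)$ for all $b\in D$. Since $1$ commutes with $b$, the intermediate binomial coefficients vanish modulo $p$, and the exponents $p^\ell$, $p^{\ell+m}$ are odd, we have $(1-b)^{p^\ell}=1-b^{p^\ell}$ and $(1-b)^{p^{\ell+m}}=1-b^{p^{\ell+m}}$; since powers of $b$ commute,
\[
(1-b)^n=(1-b)^{p^{\ell+m}}(1-b)^{p^\ell}=(1-b^{p^{\ell+m}})(1-b^{p^\ell})=1-b^{p^\ell}-b^{p^{\ell+m}}+b^n .
\]
Hence $-(1-b)^n+b^n+1=b^{p^\ell}+b^{p^{\ell+m}}$, and, writing $q:=f(1)$,
\[
f(b)+g(b)=\big(b^{p^\ell}+b^{p^{\ell+m}}\big)q \qquad\text{for all }b\in D .
\]

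Next, since $f+g$ is additive, replacing $b$ by $b+c$ and subtracting yields $\delta(b,c)\,q=0$ for all $b,c\in D$, where, with $s_k(B,C):=(B+C)^k-B^k-C^k$,
\[
\delta(B,C):=s_{p^\ell}(B,C)+s_{p^{\ell+m}}(B,C)\in D\{B,C\}.
\]
Inspecting the coefficient of the word $B^{p^\ell-1}C$ in $s_{p^\ell}$ (when $\ell\ge1$), or of $B^{p^m-1}C$ in $s_{p^m}$ (when $\ell=0$, which forces $m\ge1$), shows that $\delta\ne0$ in $D\{B,C\}$, using ${\text{\rm char}}\,D>2$ in the subcase $\ell\ge1$, $m=0$. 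Suppose $q\ne0$; then $q$ is invertible, so $\delta(b,c)=0$ for all $b,c\in D$, i.e.\ $\delta$ is a nontrivial GPI for $D$. By Theorem \ref{thm12}, $D$ is finite-dimensional over $Z(D)$; passing to a splitting field $K$ we get $D\otimes_{Z(D)}K\cong\M_r(K)$ with $r\ge2$ because $D$ is noncommutative, and, $\delta$ having prime-field coefficients, it remains an identity for $\M_r(K)$. But evaluating at the matrix units $E_{11},E_{12}$, and using $(E_{11}+E_{12})^k=E_{11}+E_{12}$ and $E_{12}^k=0$ for $k\ge2$, one gets $\delta(E_{11},E_{12})=2E_{12}$ (if $\ell\ge1$) or $\delta(E_{11},E_{12})=E_{12}$ (if $\ell=0$), which is nonzero since ${\text{\rm char}}\,K>2$: a contradiction. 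Therefore $q=f(1)=0$, and then Lemma \ref{lem14} gives $f(b)=-g(b)$ for all $b$, that is, $f=-g$.

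The step I expect to be the main obstacle is the last one: one must verify that $\delta$ is a nonzero generalized polynomial and, more substantially, that it fails to be a GPI for every noncommutative division ring, including those finite-dimensional over their centers. The first is an elementary coefficient count; the second is cleanest via splitting $D$ and evaluating on $2\times2$ matrix units (the center of a finite-dimensional noncommutative division ring being infinite, a polynomial identity over it passes to scalar extensions), and it amounts to the statement that a sum of $p^e$-th power maps with distinct exponents is never additive on a noncommutative division ring. The remaining steps run exactly as in Lemma \ref{lem1}: the passage from ``for all $x\in D^\times$'' to ``for all $b\in D$'' is built into Lemma \ref{lem14}, and $f=-g$ is immediate once $f(1)=0$ is established.
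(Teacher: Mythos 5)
Your proof is correct and follows essentially the same route as the paper: assume $f(1)\ne 0$, observe via Lemma \ref{lem14} that $x\mapsto x^{p^{\ell}}+x^{p^{\ell+m}}$ must be additive, note that its additivity defect is a nonzero polynomial identity forcing $[D\colon Z(D)]<\infty$, and then contradict by evaluating at $e_{11},e_{12}$ after scalar extension to a splitting field. The only immaterial differences are that you invoke Martindale's theorem (Theorem \ref{thm12}) where the paper uses Posner's theorem, and that you handle the $\ell=0$ evaluation ($\delta(e_{11},e_{12})=e_{12}$ rather than $2e_{12}$) more carefully than the paper's single displayed computation.
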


\begin{proof}
  It follows from Lemma \ref{lem14} that
  $$
  (f+g)(a) = ( a^{p^{\ell}} + a^{p^{\ell+m}} )f(1)
  $$
  for all $a \in D$. Assume that $f(1) \ne 0$. Then the map $a \mapsto a^{p^{\ell}} + a^{p^{\ell+m}}$ is additive. Let
  \begin{eqnarray*}
  &&Q(X,Y)\\
  &:=& ((X+Y)^{p^{\ell}} + (X+Y)^{p^{\ell+m}}) - (X^{p^{\ell}} + X^{p^{\ell+m}} ) - (Y^{p^{\ell}} + Y^{p^{\ell+m}}) \in \mathbb{Z}\{X,Y\}.
\end{eqnarray*}
  Then $Q(X,Y)\ne 0$ and has coefficients $\pm 1$, implying that $D$ is a PI-ring. It follows from Posner's theorem \cite{Posner1960} that $D$ is finite-dimensional over its center $Z(D)$. Since $D$ is noncommutative, $Z(D)$ is an infinite field.
Let $F$ be a maximal subfield of $D$ containing $Z(D)$. Then $D \otimes_{Z(D)} F \cong \M_{r}(F)$, where $r = \sqrt{\dim_{Z(D)} \, D} >1$. It is well-known that $D$ and $\M_{r}(F)$ satisfy the same polynomial identities (PIs for short) (see, for example, \cite[Corollary, p.64]{Jacobson1975}). So $Q(X,Y)$ is also a PI for $\M_{r}(F)$. However,
  \begin{eqnarray*}
    Q(e_{11},e_{12}) &=& ((e_{11}+e_{12})^{p^{\ell}} + (e_{11}+e_{12})^{p^{\ell+m}}) - (e_{11}^{p^{\ell}} + e_{11}^{p^{\ell+m}} ) - (e_{12}^{p^{\ell}} + e_{12}^{p^{\ell+m}}) \\
                     &=&  2e_{12} \ne 0,
  \end{eqnarray*}
  a contradiction. Hence $f(1) = 0$ and so $f = -g$.
\end{proof}

By Theorem \ref{thm3}, Lemmas \ref{lem1} and \ref{lem2}, we have the following.

\begin{pro} \label{pro8}
  Let $D$ be a noncommutative division ring with ${\text{\rm char}}\,D \ne 2$. Let $n > 2$ be a positive integer and $f,g \colon D \rightarrow D$ be additive maps satisfying $f(x) = x^n g(x^{-1})$
    for all $x \in D^{\times}$. Then $f = -g$.
\end{pro}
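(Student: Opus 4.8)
The plan is to derive the statement by combining Theorem \ref{thm3} with the two case-lemmas, once we check that the hypotheses of one of those lemmas always apply.

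First I would invoke Theorem \ref{thm3}: if $\text{\rm char}\,D = 0$, or if $\text{\rm char}\,D = p > 0$ with $p - 1 \nmid n - 2$, then $f = g = 0$, and in particular $f = -g$, so there is nothing further to prove. Hence I may assume $\text{\rm char}\,D = p > 0$ and $p - 1 \mid n - 2$. Since $\text{\rm char}\,D \ne 2$, this forces $p > 2$, and together with the hypothesis $n > 2$ we are precisely in the situation $(\ddag)$.

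Next I would show that $(\ddag)$ forces $n$ into Case I or Case II. Write $n = p^{\ell} k$ with $\ell \geq 0$ and $\text{\rm gcd}(p,k) = 1$. I claim $k > 1$: otherwise $n = p^{\ell}$, and since $p \equiv 1 \pmod{p-1}$ we get $n - 2 \equiv -1 \pmod{p-1}$, so $p - 1 \mid n - 2$ would give $p - 1 \mid 1$, i.e.\ $p = 2$, contradicting $p > 2$. Now two possibilities arise for $k - 1$. If $k - 1$ is not a non-negative power of $p$, then $n = p^{\ell} k$ with $\ell \geq 0$, $k > 1$, $\text{\rm gcd}(p,k) = 1$, and $k-1$ not a non-negative power of $p$ — this is exactly Case I, so Lemma \ref{lem1} yields $f(1) = 0$ and $f = -g$. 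If instead $k - 1 = p^{m}$ for some $m \geq 0$, then $k = p^{m} + 1$ (so automatically $\text{\rm gcd}(p,k) = 1$) and $n = p^{\ell}(p^{m} + 1) = p^{\ell + m} + p^{\ell}$; moreover $(\ell,m) \ne (0,0)$, since otherwise $n = 2$, contrary to $n > 2$. This is Case II, and Lemma \ref{lem2} again yields $f(1) = 0$ and $f = -g$. In every case $f = -g$, which completes the proof.

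There is essentially no analytic difficulty here; the only point demanding care is the elementary number-theoretic bookkeeping showing that the two displayed cases are exhaustive under $(\ddag)$ — in particular the verification that $k > 1$ and that $(\ell,m) \ne (0,0)$ in the second subcase — after which Lemmas \ref{lem1} and \ref{lem2} do all the work.
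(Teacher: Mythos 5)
Your proposal is correct and follows the same route as the paper, which derives the proposition directly from Theorem \ref{thm3} together with Lemmas \ref{lem1} and \ref{lem2}. The only addition is your explicit verification that, under $(\ddag)$, every $n$ falls into Case I or Case II (via $k>1$ and $(\ell,m)\ne(0,0)$); this bookkeeping is left implicit in the paper and your argument for it is sound.
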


From now on, we always make the following assumptions.\vskip6pt

{\it Let $D$ be a noncommutative division ring satisfying  ({$\ddag$}). Let $f \colon D \rightarrow D$ be an additive map satisfying
 $f(x) = -x^n f(x^{-1})$  for all $x \in D^{\times}$. }\vskip6pt

 The aim is to prove $f=0$. For $b \in D$, let $C_D (b):= \{ a \in D \mid \, ab=ba  \}$, the centralizer of $b$ in $D$.

\begin{lem} \label{lem4}
   Let $P(X):= (1+X)^n + (1-X)^n - 2X^n - 2 \in (\mathbb{Z}/p\mathbb{Z})[X]$. The following hold:

 (i)\  $f(x^2)= ((1+x)^n -x^n -1 )f(x)$
  for all $x\in D$;

(ii)\ $P(ab)f(a) = 0$ for all $a,b \in D$ with $ab=ba$;

(iii)\  Given $d \in D$, if $P(d) \ne 0$ then $f(C_D (d)) = 0$;

(iv)\ If $P(Z(D)) \ne 0$, then $f = 0$.
 \end{lem}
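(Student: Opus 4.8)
The plan is to prove the four parts in order, the real content lying in (i) and (ii); (iii) and (iv) are short deductions. Two preliminary facts will be used throughout: putting $x=1$ in the standing identity $f(x)=-x^nf(x^{-1})$ gives $2f(1)=0$, hence $f(1)=0$ (the characteristic being $\ne 2$); and since $p$ is odd with $p-1\mid n-2$, the integer $n$ is even, while Fermat's little theorem gives $2^{p-1}\equiv 1\pmod p$, so $2^{n-2}\equiv 1\pmod p$ and therefore $P(0)=0$ and $P(1)=P(-1)=2^n-4=0$ in $\mathbb{Z}/p\mathbb{Z}$.

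For (i), fix $x\in D^{\times}$ with $x\ne -1$, so that $x$, $1+x$ and $x+x^2=x(1+x)$ are invertible and pairwise commute; the whole computation then takes place in the commutative subring they generate. From $f(1)=0$ we get $f(x)=f(1+x)$, and the standing identity applied to $1+x$ gives $f(x)=-(1+x)^nf\bigl((1+x)^{-1}\bigr)$. Using the elementary identity $(1+x)^{-1}=x^{-1}-(x+x^2)^{-1}$, additivity of $f$, and the standing identity at $x$ and at $x+x^2$, one computes
\[
f\bigl((1+x)^{-1}\bigr)=-x^{-n}f(x)+(x+x^2)^{-n}\bigl(f(x)+f(x^2)\bigr).
\]
Substituting this back, simplifying (so that $(1+x)^n(x+x^2)^{-n}=x^{-n}$), and multiplying on the left by $x^n$ yields $x^nf(x)=(1+x)^nf(x)-f(x)-f(x^2)$, which rearranges to the asserted identity; the cases $x\in\{0,\pm1\}$ are immediate from $f(0)=f(1)=0$ and $n$ even.

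For (ii), the key is a one-parameter refinement of (i). Running the same computation with a spectator $c$ commuting with $d$ — now applied to the invertible, pairwise-commuting elements $c(1+d)$, $cd$ and $cd+cd^2=cd(1+d)$ — gives
\[
f(cd^2)=\bigl((1+d)^n-d^n-1\bigr)f(cd)-d^nf(c)
\]
for all commuting $c,d\in D$ with $d\ne -1$ (and trivially when $c=0$ or $d=0$). Replacing $d$ by $-d$ (also requiring $d\ne 1$), and using that $n$ is even and $f(-y)=-f(y)$, gives $f(cd^2)=-\bigl((1-d)^n-d^n-1\bigr)f(cd)-d^nf(c)$; comparing the two expressions cancels $f(cd^2)$ and $d^nf(c)$ and leaves $P(d)f(cd)=0$. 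Since $P(0)=P(\pm1)=0$, this now holds for all commuting $c,d$. Finally, given commuting $a,b$ with $b\ne 0$, take $d:=ab$ and $c:=b^{-1}$ (which commutes with $ab$ and satisfies $cd=a$) to get $P(ab)f(a)=0$, the case $b=0$ being covered by $P(0)=0$.

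For (iii), suppose $P(d)\ne 0$ and let $a\in C_D(d)$ with $a\ne 0$; put $b:=a^{-1}d$, so that $ab=ba=d$. Then (ii) gives $P(d)f(a)=0$, which forces $f(a)=0$ because $D$ is a division ring; hence $f(C_D(d))=0$. For (iv), if $P(z)\ne 0$ for some $z\in Z(D)$, apply (iii) with $d=z$: since $C_D(z)=D$ we conclude $f=0$. The main obstacle is the bookkeeping behind (i) and (ii), where several inverses and $n$-th powers must be manipulated in a noncommutative ring; this is tamed by the observation that every quantity appearing is a rational expression in one pair of commuting elements, so all the identities can be verified inside the commutative subring they generate. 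With that in place, the substitution $d\mapsto -d$, exploiting $n$ even, is the one genuinely new ingredient and produces the polynomial $P$ at once.
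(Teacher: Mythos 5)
Your proof is correct, and all four parts go through as written: the preliminary facts $f(1)=0$, $n$ even, and $P(0)=P(\pm 1)=0$ (the last using $2^{n-2}\equiv 1\ (\mathrm{mod}\ p)$, which follows from $p-1\mid n-2$) are exactly what is needed to cover the exceptional values excluded from the main computation. The strategy is the same as the paper's — derive a recursion for $f$ on products of commuting elements from the standing identity together with an inverse-decomposition identity, then exploit $n$ even via the substitution $x\mapsto -x$ to manufacture $P$ — but the identity you use and the shape of the intermediate recursion differ. The paper invokes Hua's identity $a-aba=(a^{-1}+(b^{-1}-a)^{-1})^{-1}$ with the spectator inserted as the \emph{middle} factor, carries the computation out in full noncommutative generality before specializing to $ab=ba$, and lands on $f(a^2b)=\bigl(-(1-ab)^n+(ab)^n+1\bigr)f(a)+a^nf(b)$; comparing this with its image under $a\mapsto -a$ yields $P(ab)f(a)=0$ at once. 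You instead use the partial-fraction identity $(1+x)^{-1}=x^{-1}-(x+x^2)^{-1}$ (which is Hua's identity with middle factor $-1$) together with a \emph{multiplicative} spectator $c$, obtaining $f(cd^2)=\bigl((1+d)^n-d^n-1\bigr)f(cd)-d^nf(c)$ and hence $P(d)f(cd)=0$; the extra substitution $d:=ab$, $c:=b^{-1}$ (legitimate, since $b^{-1}$ commutes with $ab$ and $cd=a$) is then needed to recover the stated form $P(ab)f(a)=0$. What your route buys is that every ring-theoretic manipulation happens inside a commutative subring, so no genuinely noncommutative identity is required for this lemma; what it costs is the additional reparametrisation step at the end. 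Parts (iii) and (iv) coincide with the paper's argument.
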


\begin{proof}
 Let $a,b \in D$ with $ab \ne 0,1$. Then, by Hua's identity,
 \begin{eqnarray*}
   a-aba = (a^{-1}+(b^{-1}-a)^{-1})^{-1}.
 \end{eqnarray*}
 Thus
 \begin{eqnarray} \label{eq:8}
   f(a-aba) &=& f((a^{-1}+(b^{-1}-a)^{-1})^{-1}) \nonumber \\
            &=& -(a^{-1}+(b^{-1}-a)^{-1})^{-n} f(a^{-1}+(b^{-1}-a)^{-1}) \nonumber \\
            &=& -(a^{-1}+(b^{-1}-a)^{-1})^{-n} (f(a^{-1}) + f((b^{-1}-a)^{-1})  \\
            &=& -(a-aba)^n (-a^{-n}f(a)-(b^{-1}-a)^{-n}f(b^{-1}-a)) \nonumber \\
            &=& (a-aba)^n (a^{-n} - (b^{-1}-a)^{-n} )f(a) + (a-aba)^n (b^{-1}-a)^{-n}f(b^{-1}) \nonumber \\
            &=& (a-aba)^n (a^{-n} - (b^{-1}-a)^{-n} )f(a) - (a-aba)^n (b^{-1}-a)^{-n} b^{-n} f(b) \nonumber
 \end{eqnarray}
 for all $a,b \in D$ with $ab \ne 0,1$. Suppose $ab = ba \ne 0,1$. Then Eq.\eqref{eq:8} becomes
 \begin{eqnarray*}
   f(a-a^2b) &=& ((1-ab)^n -(ab)^n ) f(a) - a^n f(b),
 \end{eqnarray*}
 implying
 \begin{eqnarray} \label{eq:9}
   f(a^2b) &=& (-(1-ab)^n +(ab)^n +1 ) f(a) + a^n f(b),
 \end{eqnarray}
 for all $a,b \in D$ with $ab=ba$.   Since $p$ is odd and $p-1 \mid n-2$, $n$ must be even.
  Replacing $a$ by $-a$ in Eq.\eqref{eq:9}, we have
 \begin{eqnarray} \label{eq:10}
  f(a^2b) &=& ((1+ab)^n -(ab)^n -1 ) f(a) + a^n f(b),
 \end{eqnarray}
 for all $a,b \in D$ with $ab=ba$. Replacing $b$ by $1$ in Eq.\eqref{eq:10}, we get
$$
f(a^2) = ((1+a)^n -a^n -1 )f(a).
$$
This proves (i). Comparing Eq.\eqref{eq:9} and Eq.\eqref{eq:10}, we have
$P(ab)f(a) = 0$ for all $a,b \in D$ with $ab=ba$. Thus (ii) is proved.

 Obviously, (iv) follows from (iii). To prove (iii), let $d \in D$ be such that $P(d) \ne 0$. Let $a \in C_D (d)^{\times}$ and $b = a^{-1} d$. Then $ab = ba = d$.
 By (ii) we have
 $$
    P(d)f(a)=P(ab)f(a) =0,
$$
 implying $f(a) = 0$. Hence $f(C_D (d)) = 0$, as asserted.
\end{proof}

\begin{lem} \label{lem5}
   If Case (I) holds, then $f = 0$.
\end{lem}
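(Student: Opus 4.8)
The plan is to argue by contradiction: suppose $f\neq 0$ and derive $f=0$. The main tools are Lemma~\ref{lem4}: part (i) gives $f(x^2)=w(x)f(x)$ for all $x\in D$, where $w(X):=(1+X)^n-X^n-1$, while parts (iii)--(iv) reduce the conclusion $f=0$ to exhibiting a single central element $z\in Z(D)$ with $P(z)\neq 0$, where $P(X):=(1+X)^n+(1-X)^n-2X^n-2$. So the work splits into (a) identifying $w$ and $P$ as explicit polynomials over the prime field, (b) showing $\deg w(X)>1$ and $P(X)\neq 0$, and (c) producing a nonzero value of $P$ inside $Z(D)$.

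First I would rewrite $w$ and $P$ as polynomials over $\mathbb{Z}/p\mathbb{Z}$. Since ${\text{\rm char}}\,D=p$ and $n=p^{\ell}k$, the Frobenius identity $(1+X)^{p^{\ell}}=1+X^{p^{\ell}}$ gives $w(X)=(1+X^{p^{\ell}})^k-X^{p^{\ell}k}-1=\sum_{t=1}^{k-1}\binom{k}{t}X^{p^{\ell}t}$. Because $\gcd(p,k)=1$, its leading coefficient $\binom{k}{k-1}=k$ is nonzero in $\mathbb{Z}/p\mathbb{Z}$; moreover $k\geq 3$ (else $k-1=1=p^0$, excluded in Case (I)), and in fact $k$ is even, since $n=p^{\ell}k$ is even (as $p$ is odd and $p-1\mid n-2$) while $p^{\ell}$ is odd, so $k\geq 4$. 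Hence $w(X)\neq 0$ in $D\{X\}$, $w(0)=0$, and $\deg w(X)=p^{\ell}(k-1)>1$. The same Frobenius computation gives $P(X)=(1+X^{p^{\ell}})^k+(1-X^{p^{\ell}})^k-2X^{p^{\ell}k}-2$; since $k$ is even, the $t=0$ term cancels the $-2$ and the $t=k$ term cancels the $-2X^{p^\ell k}$, leaving exactly $P(X)=2\sum_{\substack{t=2\\ t\colon \mathrm{even}}}^{k-2}\binom{k}{t}X^{p^{\ell}t}$, which is nonzero in $(\mathbb{Z}/p\mathbb{Z})[X]$ by Lemma~\ref{lem3}(ii).

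Next, applying Lemma~\ref{lem4}(i) I have $f(x^2)=w(x)f(x)$ for all $x\in D$; since $f\neq 0$ is additive and $\deg w(X)>1$, Corollary~\ref{cor3} forces $[D\colon Z(D)]<\infty$. As $D$ is noncommutative it cannot be a finite division ring (Wedderburn's little theorem), so $Z(D)$ is an infinite field. A nonzero polynomial over an infinite field cannot vanish identically on it, so $P(z)\neq 0$ for some $z\in Z(D)$; since $C_D(z)=D$, Lemma~\ref{lem4}(iii) yields $f(D)=f(C_D(z))=0$, i.e.\ $f=0$, contradicting $f\neq 0$. Therefore $f=0$.

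The only genuinely structural step is the passage to finite-dimensionality over the center via Corollary~\ref{cor3} (which ultimately rests on Martindale's GPI theorem, Theorem~\ref{thm12}); the remaining content is the polynomial bookkeeping packaged in Lemma~\ref{lem3}. The point that needs care is confirming that $k$ is even and $k\geq 4$, so that the $t=k$ term in $P$ really cancels and the sum defining $P$ is nonempty --- this is precisely where the standing hypotheses $p-1\mid n-2$ (with $p$ odd) and ``$k-1$ is not a non-negative power of $p$'' are used.
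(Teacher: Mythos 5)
Your proof is correct, but it follows a genuinely different route from the paper's. The paper's own proof of Lemma \ref{lem5} stays entirely with the polynomial $P(X)=(1+X)^n+(1-X)^n-2X^n-2$: assuming $f\ne 0$, Lemma \ref{lem4}(iv) forces $P(Z(D))=0$, and the authors then split into two cases according to whether $P$ vanishes at some non-central element; the essential case uses that the centralizer $C_D(w)$ of any $w\in D\setminus Z(D)$ is an infinite division ring (\cite[Theorem 13.10]{Lam2001}) on which the nonzero polynomial $P$ cannot vanish identically (\cite[Theorem 16.7]{Lam2001}), so Lemma \ref{lem4}(iii) annihilates $f$ on enough centralizers to give $f=0$. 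You instead route the argument through the identity $f(x^2)=w(x)f(x)$ of Lemma \ref{lem4}(i) and Corollary \ref{cor3}: after verifying $\deg w(X)=p^{\ell}(k-1)>1$ (which indeed needs the Case (I) hypothesis $k-1\ne p^0$, so $k\ge 3$), you obtain $[D\colon Z(D)]<\infty$, hence $Z(D)$ infinite by Wedderburn's little theorem, hence a central non-root of the nonzero polynomial $P$ supplied by Lemma \ref{lem3}(ii), and then Lemma \ref{lem4}(iii)--(iv) finishes. Both arguments are sound and rest on the same preliminary Lemmas \ref{lem3} and \ref{lem4}; yours is pleasingly uniform with the paper's own handling of Case (II) at the end of the proof of Theorem \ref{thm10}, where precisely this Corollary-\ref{cor3}-plus-Wedderburn step appears, but it invokes the GPI machinery (Martindale's Theorem \ref{thm12}, through Corollary \ref{cor3}) where the paper's version of Lemma \ref{lem5} gets by with elementary facts about roots of central polynomials in division rings. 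Your bookkeeping on $w$ and $P$ (Frobenius expansion, $k$ even, $k\ge 4$, nonvanishing of the leading coefficient since $\gcd(p,k)=1$) is accurate and complete.
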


\begin{proof}
Suppose on the contrary that $f\ne 0$.
  Let
  $$
  P(X):= (1+X)^n + (1-X)^n - 2X^n - 2 \in (\mathbb{Z}/p\mathbb{Z})[X].
   $$
   By Lemma \ref{lem3} (ii), $P(X) \ne 0$ in $(\mathbb{Z}/p\mathbb{Z})[X]$. By Lemma \ref{lem4} (iv), it follows that $P(Z(D)) = 0$.
  \begin{enumerate}  [\bfseries {\text Case} 1:]
    \item $P(d) \ne 0$ for any $d \in D \setminus Z(D)$. By Lemma \ref{lem4}, $f(C_D (d)) = 0$ for all $d \in D \setminus Z(D)$. In particular, $f(Z(D)) = 0$ and $f(d) = 0$ for all $d \in D \setminus Z(D)$, implying $f = 0$.
    \item $P(d) = 0$ for some $d \in D \setminus Z(D)$.
  Let $w\in D \setminus Z(D)$.
    By \cite[Theorem 13.10]{Lam2001}, $C_D (w)$ is an infinite division ring. It follows from \cite[Theorem 16.7]{Lam2001} that $P(a) \ne 0$ for some $a \in C_D (w)$. According to Lemma \ref{lem4} (iii), $f(C_D (a)) = 0$. In particular, $f(w) = 0$ and $f(Z(D)) = 0$. Hence $f = 0$.
  \end{enumerate}
  Therefore we conclude that $f = 0$.
\end{proof}

We next study Case (II). The following is a key observation.

\begin{lem} \label{lem6}
Assume that Case (II) holds. For $a \in D$, if $C_D (a)$ is noncommutative, then $f(a) = 0$ and
          $2f(ab) = (a^{p^{\ell+m}} + a^{p^{\ell}})f(b)$ for all $b \in C_D (a)$.
\end{lem}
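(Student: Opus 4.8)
The plan is to specialise Lemma \ref{lem4}(i) to Case (II), polarise the resulting quadratic identity at the element $a$, and then show that if $f(a)\ne 0$ the centraliser $C_D(a)$ would satisfy a nonzero polynomial identity incompatible with being a noncommutative division ring.

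First I would note that, since $n=p^{\ell+m}+p^{\ell}$ and $1$ commutes with $x$, the Frobenius identities $(1+x)^{p^k}=1+x^{p^k}$ give $(1+x)^n-x^n-1=(1+x^{p^{\ell+m}})(1+x^{p^{\ell}})-x^n-1=x^{p^{\ell}}+x^{p^{\ell+m}}$, so Lemma \ref{lem4}(i) becomes
\[
f(x^2)=w(x)f(x)\quad(x\in D),\qquad\text{where }\ w(x):=x^{p^{\ell}}+x^{p^{\ell+m}}.
\]
Now fix $b\in C_D(a)$. Replacing $x$ by $a+b$ in this identity, using the additivity of $f$ together with $ab=ba$, and subtracting $f(a^2)=w(a)f(a)$ and $f(b^2)=w(b)f(b)$ — where $w(a+b)=w(a)+w(b)$ since $a$ and $b$ commute — one obtains
\[
2f(ab)=w(a)f(b)+w(b)f(a)\qquad\text{for all }b\in C_D(a).
\]
In this identity $b\mapsto 2f(ab)$ and $b\mapsto w(a)f(b)$ are additive on $C_D(a)$, hence so is $b\mapsto w(b)f(a)$; equivalently, $\bigl(w(u+v)-w(u)-w(v)\bigr)f(a)=0$ for all $u,v\in C_D(a)$.

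Next, suppose for contradiction that $f(a)\ne 0$. Then $w(u+v)=w(u)+w(v)$ for all $u,v\in C_D(a)$, i.e.\ the division ring $C_D(a)$ satisfies the polynomial identity
\[
Q(X,Y):=\bigl((X+Y)^{p^{\ell}}-X^{p^{\ell}}-Y^{p^{\ell}}\bigr)+\bigl((X+Y)^{p^{\ell+m}}-X^{p^{\ell+m}}-Y^{p^{\ell+m}}\bigr),
\]
which is not the zero polynomial (look at its homogeneous component of top degree — here $(\ell,m)\ne(0,0)$ and ${\text{\rm char}}\,D\ne 2$ are used). Hence $C_D(a)$ is a PI division ring, so by Posner's theorem \cite{Posner1960} it is finite-dimensional over its centre $K$; being noncommutative, $\dim_K C_D(a)=r^2$ for some $r\ge 2$, and $K$ is infinite (a finite division ring is commutative). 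Choosing a maximal subfield $F$ of $C_D(a)$ containing $K$ gives $C_D(a)\otimes_K F\cong\M_r(F)$, and, exactly as in the proof of Lemma \ref{lem2}, $C_D(a)$ and $\M_r(F)$ satisfy the same polynomial identities, so $Q$ is a polynomial identity for $\M_r(F)$. But $(e_{12}+e_{21})^3=e_{12}+e_{21}$, so every odd power of $e_{12}+e_{21}$ equals $e_{12}+e_{21}$; since $p^{\ell},p^{\ell+m}$ are odd and $e_{12}^2=e_{21}^2=0$, this gives $Q(e_{12},e_{21})=2(e_{12}+e_{21})\ne 0$ in $\M_r(F)$ (because ${\text{\rm char}}\,F=p>2$), a contradiction. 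Therefore $f(a)=0$, and then the identity $2f(ab)=w(a)f(b)+w(b)f(a)$ above reduces to $2f(ab)=(a^{p^{\ell+m}}+a^{p^{\ell}})f(b)$ for all $b\in C_D(a)$, as asserted.

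The step I expect to be the crux is the last one: checking that $Q$ is genuinely nonzero in the free algebra, running the reduction of the PI division ring $C_D(a)$ to a matrix algebra over a field just as in Lemma \ref{lem2}, and verifying that $e_{12},e_{21}$ violate $Q$ — which is precisely where the hypothesis ${\text{\rm char}}\,D\ne 2$ is needed. The specialisation of Lemma \ref{lem4}(i) and the polarisation that precede it are routine.
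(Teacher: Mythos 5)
Your proof is correct, and it reaches the conclusion by a somewhat different and in places cleaner route than the paper's. The paper works with the two-variable consequence of Hua's identity, $f(a^2b)=\bigl(a^{p^{\ell+m}}b^{p^{\ell+m}}+a^{p^{\ell}}b^{p^{\ell}}\bigr)f(a)+a^nf(b)$ for commuting $a,b$ (Eq.\eqref{eq:11}); when $f(a)\ne 0$ this forces $b\mapsto a^{p^{\ell+m}}b^{p^{\ell+m}}+a^{p^{\ell}}b^{p^{\ell}}$ to be additive on $C_D(a)$, giving a GPI whose coefficients involve $a$, so to contradict it on $\M_r(F)$ the paper must invoke Jacobson's theorem to find $\beta\in Z(C_D(a))$ with $a^{p^{\ell+m}}\beta^{p^{\ell+m}}+a^{p^{\ell}}\beta^{p^{\ell}}\ne 0$, and it then obtains the final formula by the substitution $a\mapsto a+1$ in $f(a^2b)=a^nf(b)$. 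You instead polarise the one-variable identity of Lemma \ref{lem4}(i) at $x=a+b$ with $b\in C_D(a)$, which yields $2f(ab)=w(a)f(b)+w(b)f(a)$ in one step; your identity $Q(X,Y)$ has coefficients in the prime field, so no choice of $\beta$ (hence no appeal to Jacobson's theorem) is needed, and the asserted formula for $2f(ab)$ falls out immediately once $f(a)=0$. Both arguments share the same skeleton: additivity of a Frobenius-type map forces a nontrivial (G)PI on $C_D(a)$, finite-dimensionality follows from Posner/Martindale, and a matrix evaluation gives the contradiction. One cosmetic slip: when $\ell=0$ the first bracket of your $Q$ vanishes identically, so $Q(e_{12},e_{21})=e_{12}+e_{21}$ rather than $2(e_{12}+e_{21})$; either value is nonzero, so nothing is affected.
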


\begin{proof}
  In this case, the equation Eq.\eqref{eq:10} becomes
  \begin{eqnarray} \label{eq:11}
    f(a^2b) &=& (a^{p^{\ell+m}}b^{p^{\ell+m}} +a^{p^{\ell}}b^{p^{\ell}} ) f(a) + a^n f(b),
  \end{eqnarray}
  for all $a,b \in D$ with $ab=ba$.

  Let $a \in D$ be such that $C_D (a)$ is noncommutative. Suppose that $f(a) \ne 0$. Then, by Eq.\eqref{eq:11},
  $$
  b \mapsto a^{p^{\ell+m}}b^{p^{\ell+m}} +a^{p^{\ell}}b^{p^{\ell}}
  $$
  is an additive map on $C_D (a)$. Let
\begin{eqnarray*}
    Q(X,Y) &:=& (a^{p^{\ell+m}}(X+Y)^{p^{\ell+m}} +a^{p^{\ell}}(X+Y)^{p^{\ell}}) \\
            && - (a^{p^{\ell+m}}X^{p^{\ell+m}} +a^{p^{\ell}}X^{p^{\ell}}) - (a^{p^{\ell+m}}Y^{p^{\ell+m}} +a^{p^{\ell}}Y^{p^{\ell}}) \in Z(C_D (a))\{X,Y \}.
 \end{eqnarray*}
  Then $Q(X,Y)$ is a nontrivial GPI for $C_D (a)$. It follows from Theorem \ref{thm12} that $C_D (a)$ is finite-dimensional over $Z(C_D (a))$.
  Since $C_D (a)$ is noncommutative, $Z(C_D (a))$ is an infinite field.

  Let $F$ be a maximal subfield of $C_D (a)$ containing $Z(C_D (a))$. Then $C_D (a) \otimes_{Z(C_D (a))} F \cong \M_{r}(F)$, where $r = \sqrt{\dim_{Z(C_D (a))} \, C_D (a)} >1$. It is well-known that $C_D (a)$ and $\M_{r}(F)$ satisfy the same PIs (see, for example, \cite[Corollary, p.64]{Jacobson1975}). So $Q(X,Y)$ is a PI for $\M_{r}(F)$. Since $C_D (a)$ is a noncommutative finite-dimensional division algebra over $Z(C_D (a))$, $Z(C_D (a))$ is not algebraic over $\mathbb{Z}/p\mathbb{Z}$ by Jacobson's theorem \cite[Theorem 2, p. 183]{Jacobson1964} (or see \cite[Theorem 13.11]{Lam2001}). So we can always find an element $\beta \in Z(C_D (a))$ such that $a^{p^{\ell+m}} \beta^{p^{\ell+m}}+ a^{p^{\ell}} \beta^{p^{\ell}} \ne 0$. However,
  \begin{eqnarray*}
    Q(\beta e_{11},\beta e_{12}) &=& (a^{p^{\ell+m}}(\beta e_{11}+\beta e_{12})^{p^{\ell+m}} + a^{p^{\ell}}(\beta e_{11}+\beta e_{12})^{p^{\ell}}) \\
                      &&- (a^{p^{\ell+m}} (\beta e_{11})^{p^{\ell+m}} + a^{p^{\ell}} (\beta e_{11})^{p^{\ell}} ) - (a^{p^{\ell+m}} (\beta e_{12})^{p^{\ell+m}} + a^{p^{\ell}} (\beta e_{12})^{p^{\ell}}) \\
                     &=&  (a^{p^{\ell+m}} \beta^{p^{\ell+m}}+ a^{p^{\ell}} \beta^{p^{\ell}})e_{12} \ne 0,
  \end{eqnarray*}
  a contradiction. Hence $f(a) = 0$. In this case, it follows from Eq.\eqref{eq:11} that
    \begin{eqnarray}
    f(a^2b) = a^{p^{\ell+m}} a^{p^{\ell}} f(b)
    \label{eq:29}
  \end{eqnarray}
  for all $b \in C_D (a)$. Note that $C_D(a+1)=C_D(a)$.
  Replacing $a$ by $a+1$ in Eq.\eqref{eq:29}, we have
  $$
  2f(ab) = (a^{p^{\ell+m}} + a^{p^{\ell}})f(b)
  $$
    for all $b \in C_D (a)$.
\end{proof}

We first deal with the case $m = 0$.

\begin{lem} \label{lem9}
If $n = 2p^{\ell}$ for some integer $\ell > 0$ and $|Z(D)| \geq n-1$, then $f = 0$.
\end{lem}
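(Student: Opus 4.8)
The plan is to reduce the hypothesis to a single simple identity and then invoke the structure theory of finite-dimensional division algebras. Since $n=2p^{\ell}$ and $1$ commutes with $X$ in $D\{X\}$, we have $(1\pm X)^{p^{\ell}}=1\pm X^{p^{\ell}}$, so that $(1+X)^{n}-X^{n}-1=(1+X^{p^{\ell}})^{2}-X^{2p^{\ell}}-1=2X^{p^{\ell}}$; hence Lemma~\ref{lem4}(i) becomes $f(x^{2})=2x^{p^{\ell}}f(x)$ for all $x\in D$. Because $\ell>0$, the coefficient $2X^{p^{\ell}}$ has degree $p^{\ell}>1$, so Corollary~\ref{cor3} gives one of two alternatives: either $f=0$, and the lemma holds, or $D$ is finite-dimensional over $F:=Z(D)$. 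Assume the latter. As $D$ is noncommutative, Wedderburn's little theorem forces $F$ to be infinite; in particular $|Z(D)|\ge n-1$ is available, which is exactly the inequality needed by the counting step at the end.

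Next I would record the relevant structure of $f$. Applying Lemma~\ref{lem6} to elements $a\in F$, for which $C_{D}(a)=D$ is noncommutative, yields $f|_{F}=0$ together with the semilinearity $f(\lambda x)=\lambda^{p^{\ell}}f(x)$ for all $\lambda\in F$ and $x\in D$. Substituting $x=a+b$ with $a,b$ commuting into $f(x^{2})=2x^{p^{\ell}}f(x)$ and using $(a+b)^{p^{\ell}}=a^{p^{\ell}}+b^{p^{\ell}}$ gives the twisted Leibniz rule $f(ab)=a^{p^{\ell}}f(b)+b^{p^{\ell}}f(a)$ for all commuting $a,b\in D$; iterating it, $f(a^{k})=k\,a^{(k-1)p^{\ell}}f(a)$ for all $a\in D$ and $k\ge 1$.

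The crux is the following. Call $d\in D$ \emph{generic} if $1,d,\dots,d^{r-1}$ are $F$-independent, where $r\ge 2$ denotes the degree of $D$ over $F$, and the minimal polynomial $m(t)=t^{r}-\sum_{k<r}\gamma_{k}t^{k}$ of $d$ over $F$ is separable; these are Zariski-open conditions on $d$, and the generic locus is nonempty because $D$ possesses a maximal subfield separable over $F$. For generic $d$, evaluate $f(d^{r})$ in two ways: directly as $r\,d^{(r-1)p^{\ell}}f(d)$, and through $d^{r}=\sum_{k<r}\gamma_{k}d^{k}$ together with the semilinearity and $f(d^{k})=k\,d^{(k-1)p^{\ell}}f(d)$, which gives $\big(\sum_{k=1}^{r-1}k\gamma_{k}^{p^{\ell}}d^{(k-1)p^{\ell}}\big)f(d)$. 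Subtracting, $(m^{\sigma})'(d^{p^{\ell}})\,f(d)=0$, where $\sigma$ is the $p^{\ell}$-power map of $F$ and $m^{\sigma}$ is $m$ with each coefficient replaced by its $\sigma$-image. Since $m$ is separable, a standard Frobenius argument shows that $d^{p^{\ell}}$ still generates $F(d)$ and that $m^{\sigma}$ is its (separable) minimal polynomial over $F$, whence $(m^{\sigma})'(d^{p^{\ell}})\ne 0$; as $D$ is a division ring, $f(d)=0$. Thus $f$ vanishes on every generic element. Finally, since $F$ is infinite, for any $x\in D$ a short Vandermonde/density argument produces $u\in D$ with both $u$ and $x-u$ generic (the generic locus and its translate by $x$ are nonempty Zariski-open sets, hence share an $F$-point), so $f(x)=f(u)+f(x-u)=0$. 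Hence $f=0$, contradicting $f\ne 0$, and the lemma follows.

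The step I expect to be the main obstacle is the generic-element computation: pinning down the precise relation $(m^{\sigma})'(d^{p^{\ell}})f(d)=0$, keeping careful track of the Frobenius twist $\sigma$, and verifying its non-vanishing. Restricting to separable generic $d$ is essential, because over an imperfect center an inseparable maximal subfield makes $(m^{\sigma})'$ vanish identically; the density argument over the infinite field $Z(D)$ is then precisely the device that carries the conclusion from the generic elements back to all of $D$.
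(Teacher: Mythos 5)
Your argument is correct in substance, but it is a genuinely different proof from the one in the paper. The paper never leaves the elementary setting: it polarizes $f(x^{2})=2x^{p^{\ell}}f(x)$ via Lemma~\ref{lem10}, splits $(X+Y)^{p^{\ell}}$ into its homogeneous components in $Y$, scales $b\mapsto\alpha b$ and runs a Vandermonde argument over $Z(D)$ (this is where the hypothesis $|Z(D)|\ge n-1$ enters), extracts $P_{1}(a,b)f(b)=0$, i.e.\ $[a^{p^{\ell}},b]=0$ whenever $f(b)\ne 0$, and then uses the fact that a group is not the union of two proper subgroups together with Kaplansky's theorem to force $D$ commutative. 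You instead invoke Corollary~\ref{cor3} at the outset to reduce to $[D:Z(D)]<\infty$ with $Z(D)$ infinite (by Wedderburn), and then run a structure-theoretic argument: the twisted Leibniz rule $f(ab)=a^{p^{\ell}}f(b)+b^{p^{\ell}}f(a)$ on commuting pairs, the evaluation $f(d^{r})$ against the minimal polynomial of a separable degree-$r$ element $d$ yielding $(m^{\sigma})'(d^{p^{\ell}})f(d)=(m'(d))^{p^{\ell}}f(d)=0$ (the Frobenius identity on the commutative field $F(d)$ makes the non-vanishing immediate, as you anticipated), and a Zariski-density argument to pass from generic elements to all of $D$. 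Your route has the advantage of not using the hypothesis $|Z(D)|\ge n-1$ at all, but it buys this at the price of two nontrivial external inputs that must be cited and checked: the Jacobson--Noether/Koethe theorem that a finite-dimensional central division algebra contains a maximal subfield separable over the center (needed for non-emptiness of your generic locus, and genuinely necessary when $Z(D)$ is imperfect), and the density step, which as written is only a sketch --- one has to verify that the coefficients of the minimal polynomial, hence its discriminant, are rational functions of the coordinates of $d$ on the locus where $1,d,\dots,d^{r-1}$ are independent, clear denominators, and then use that a product of not-identically-zero polynomial functions on $F^{r^{2}}$ with $F$ infinite is not identically zero. The paper's proof is longer on computation but entirely self-contained; yours is conceptually cleaner once the two classical facts are granted.
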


\begin{proof}
  In this case,
 $f(a) = -a^{2p^{\ell}} f(a^{-1})$  for all $a \in D^{\times}$, and it follows from Eq.\eqref{eq:11} that
  \begin{eqnarray} \label{eq:12}
    f(a^2) &=& 2a^{p^{\ell}} f(a)
  \end{eqnarray}
  for all $a \in D$. Also, by Lemma \ref{lem6}, we have
  \begin{eqnarray} \label{eq:15}
  f(\alpha b) = \alpha^{p^{\ell}} f(b)
  \end{eqnarray}
  for all $\alpha \in Z(D)$ and $b \in D$.

  Note that $2^{p^\ell}\equiv 2$\, mod $p$. Applying Lemma \ref{lem10} to Eq.\eqref{eq:12}, we get
    \begin{eqnarray} \label{eq:14}
    ( 2a^{p^{\ell}} - (a+b)^{p^{\ell}} - (a-b)^{p^{\ell}} )f(a) &=& (  (a+b)^{p^{\ell}} - (a-b)^{p^{\ell}} - 2b^{p^{\ell}} ) f(b)
  \end{eqnarray}
  for all $a,b \in D$. Write
  \begin{eqnarray*}
    (X+Y)^{p^{\ell}} &=&\sum_{i=0}^{p^{\ell}}P_i(X,Y)
  \end{eqnarray*}
  where $P_i(X,Y) \in (\mathbb{Z}/p\mathbb{Z})\{ X,Y \}$ is homogeneous in $Y$ of degree $i$. Then Eq.\eqref{eq:14} becomes
 \begin{eqnarray}
\sum_{i=1}^{{p^{\ell}-1}\over 2}P_{2i}(a, b)f(a) =\sum_{i=1}^{{p^{\ell}-1}\over 2}P_{2i-1}(a, b)f(b)
\label{eq:30}
\end{eqnarray}
  for all $a,b \in D$. Replacing $b$ by $\alpha b$ in Eq.\eqref{eq:30} and applying Eq.\eqref{eq:15}, we get
  \begin{eqnarray}
\sum_{i=1}^{{p^{\ell}-1}\over 2}\alpha^{2i}P_{2i}(a, b)f(a) = \sum_{i=1}^{{p^{\ell}-1}\over 2}\alpha^{2i-1+p^{\ell}}P_{2i-1}(a, b)f(b)
\label{eq:31}
\end{eqnarray}
  for all $a,b \in D$ and $\alpha\in Z(D)$.
  Note that $|Z(D)| \geq n-1 = 2p^{\ell}-1$. Applying the standard Vandermonde argument to solve Eq.\eqref{eq:31}, we have
  \begin{eqnarray*}
  P_{2i}(a, b)f(a) =0=P_{2i-1}(a, b)f(b)
  \end{eqnarray*}
    for all $a,b \in D$ for $1\leq i\leq {{p^{\ell}-1}\over 2}$.
  Let $b\in D$ be such that $f(b)\ne 0$. Then $P_1(a, b)=0$ and hence
  $$
  [a^{p^\ell}, b]=[a, P_1(a, b)]=0
  $$
  for all $a\in D$. Thus, given $a\in D$, we get
  $$
  D = C_D(a^{p^\ell})\cup \{b\in D\mid f(b)=0\}.
  $$
  Since $f\ne 0$, we get $C_D(a^{p^\ell})=D$. That is, $a^{p^\ell}\in Z(D)$ for all $a\in D$. It follows from Kaplansky's theorem \cite[Theorem]{Kaplansky1951} that $D$ is commutative, a contradiction. This proves that $f=0$.
\end{proof}

For $m > 0$, we have the following result.

\begin{lem} \label{lem7}
 If $n = p^{\ell+m}+p^{\ell}$ for some integers $\ell \geq 0$ and $m > 0$, then either $f = 0$ or $p^m$ is a positive power of $|Z(D)|$.
\end{lem}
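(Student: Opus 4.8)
The plan is to mimic the structure of Lemma~\ref{lem9} but now with two distinct Frobenius exponents $p^{\ell}$ and $p^{\ell+m}$. First I would use Lemma~\ref{lem4}(i), which in Case~(II) reads
\begin{eqnarray*}
f(x^2)=\big((1+x)^{p^{\ell+m}+p^{\ell}}-x^{p^{\ell+m}+p^{\ell}}-1\big)f(x)=\big((1+x)^{p^{\ell}}(1+x)^{p^{\ell+m}}-x^{p^{\ell}}x^{p^{\ell+m}}-1\big)f(x)
\end{eqnarray*}
for all $x\in D$; so $f(x^2)=w(x)f(x)$ with $w(X)\in(\mathbb Z/p\mathbb Z)\{X\}$ of degree $p^{\ell+m}$. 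If $f\ne 0$, Corollary~\ref{cor3} forces $[D\colon Z(D)]<\infty$ (the degree of $w$ being at least $p>2>1$), hence $Z(D)$ is an infinite field. So from now on I would assume $f\ne0$ and work inside a finite-dimensional division algebra over the infinite field $Z(D)$.

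Next I would exploit Lemma~\ref{lem6}: since $C_D(\alpha)=D$ is noncommutative for any $\alpha\in Z(D)$, taking $a=\alpha\in Z(D)$ gives $f(\alpha)=0$ for all $\alpha\in Z(D)$ and, more usefully, $2f(\alpha b)=(\alpha^{p^{\ell+m}}+\alpha^{p^{\ell}})f(b)$ for all $b\in D$, i.e. (as ${\text{\rm char}}\,D\ne2$)
\begin{eqnarray*}
f(\alpha b)=\tfrac12(\alpha^{p^{\ell+m}}+\alpha^{p^{\ell}})f(b)\qquad(\alpha\in Z(D),\ b\in D).
\end{eqnarray*}
Applying this twice (with $\alpha$ and $\alpha'$, or with $\alpha\alpha'$) shows $\alpha\mapsto\tfrac12(\alpha^{p^{\ell+m}}+\alpha^{p^{\ell}})$ is a multiplicative map on $Z(D)^{\times}$ wherever $f$ is not killed, which already pins down a strong arithmetic constraint on the field $Z(D)$. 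Then I would run the Lemma~\ref{lem10} linearization on $f(x^2)=w(x)f(x)$, expand $(1+x)^{p^{\ell}}$ and $(1+x)^{p^{\ell+m}}$ into their homogeneous-in-$Y$ pieces after substituting $(x,y)$, substitute $b\mapsto\alpha b$ using the displayed scaling identity, and collect powers of $\alpha$. Since $Z(D)$ is infinite we may apply the Vandermonde argument to separate the homogeneous components, obtaining for each relevant bidegree an identity of the form $P_{(r)}(a,b)f(a)=0$ and $Q_{(r)}(a,b)f(b)=0$ in $D$.

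From the lowest nontrivial component I expect to get, for every $b$ with $f(b)\ne0$, a relation of the shape $[a^{p^{\ell}}+ (\text{lower Frobenius terms}),b]=0$ for all $a\in D$; chasing this as in Lemma~\ref{lem9} (writing $D$ as a union of a centralizer and the kernel of $f$, then invoking that a division ring is not a union of two proper subgroups together with Kaplansky's theorem \cite{Kaplansky1951}) will force either $f=0$ or that a suitable Frobenius power sends $D$ into $Z(D)$. The remaining task is then purely field-theoretic: if $\tfrac12(\alpha^{p^{\ell+m}}+\alpha^{p^{\ell}})$ is multiplicative on the infinite field $Z(D)$ and $f\ne0$, one deduces $\alpha^{p^{\ell+m}-p^{\ell}}=1$ on $Z(D)^{\times}$, so $Z(D)^{\times}$ is a finite group of exponent dividing $p^{\ell}(p^m-1)$; since $Z(D)$ has characteristic $p$ this makes $Z(D)$ finite, say $|Z(D)|=q$, and the multiplicativity/scaling constraints then read exactly as $q^m \mid \dots$, yielding that $p^m$ is a positive power of $q=|Z(D)|$. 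The main obstacle I anticipate is the bookkeeping in separating the two Frobenius exponents in the linearized identity and correctly identifying which homogeneous component forces the centralizer dichotomy when $\ell>0$ (so that $a\mapsto a^{p^{\ell}}$ is itself only additive, not injective on relevant pieces); handling the $\ell=0$ versus $\ell>0$ subcases of that step, and making the Vandermonde separation legitimate, is where the care is needed.
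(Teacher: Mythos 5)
Your proposal does contain a complete and correct argument, but the part that actually proves the lemma is the short ``multiplicativity'' aside, not the announced plan of mimicking Lemma~\ref{lem9} --- and that aside is precisely the paper's proof. The paper takes the scaling identity $2f(\alpha b)=(\alpha^{p^{\ell+m}}+\alpha^{p^{\ell}})f(b)$ from Lemma~\ref{lem6} (valid for every $\alpha\in Z(D)$ since $C_D(\alpha)=D$ is noncommutative), evaluates $4f(\alpha^2b)$ in two ways, and obtains $(\alpha^{p^{\ell+m}}-\alpha^{p^{\ell}})^2f(b)=(\alpha^{p^m}-\alpha)^{2p^{\ell}}f(b)=0$; if $f\ne0$ this forces $\alpha^{p^m}=\alpha$ for all central $\alpha$, so $Z(D)$ is a subfield of $\GF(p^m)$ and $p^m$ is a positive power of $|Z(D)|$. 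This is exactly your observation that $\alpha\mapsto\tfrac12(\alpha^{p^{\ell+m}}+\alpha^{p^{\ell}})$ must be multiplicative when $f\ne0$, carried to its conclusion. Everything else in your plan is dispensable here: the Lemma~\ref{lem10} linearization, the Vandermonde separation and the Kaplansky step are the right tools for Lemma~\ref{lem9}, i.e.\ the case $m=0$, where the scaling identity degenerates to mere $p^{\ell}$-semilinearity $f(\alpha b)=\alpha^{p^{\ell}}f(b)$ and yields no arithmetic constraint on $Z(D)$; for $m>0$ the two distinct Frobenius exponents make the squaring trick succeed immediately, so the bookkeeping you (rightly) worry about never has to be done.

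Two small cautions on the parts you keep. Your opening step (Corollary~\ref{cor3} gives $[D\colon Z(D)]<\infty$, hence $Z(D)$ infinite by Wedderburn when $f\ne0$) together with the finiteness of $Z(D)$ you derive at the end would prove $f=0$ outright --- stronger than the stated lemma, and in fact exactly how the paper finishes the proof of Theorem~\ref{thm10}; it is consistent, just a different placement of the final contradiction. And when extracting multiplicativity, make explicit that you cancel a single $f(b)\ne0$ from the two evaluations of $f(\alpha\alpha'b)$, and that $\alpha^{p^{\ell+m}}=\alpha^{p^{\ell}}$ already yields $\alpha^{p^m}=\alpha$ by injectivity of the Frobenius.
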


\begin{proof}
  Let $\alpha \in Z(D)$. Then, by Lemma \ref{lem6},
  $$
  2f(\alpha b) = (\alpha^{p^{\ell+m}} + \alpha^{p^{\ell}})f(b)
  $$
  for all $b \in D$. Thus
  $$
  4f(\alpha^2 b) = (2\alpha^{2p^{\ell+m}} + 2\alpha^{2p^{\ell}})f(b)
  $$
  for all $b \in D$. On the other hand,
  \begin{eqnarray*}
    4f(\alpha^2 b) &=& 4f(\alpha (\alpha b)) \\
                   &=& 2 (\alpha^{p^{\ell+m}} + \alpha^{p^{\ell}}) f(\alpha b) \\
                   &=& (\alpha^{p^{\ell+m}} + \alpha^{p^{\ell}})^2 f(b)
  \end{eqnarray*}
  for all $b \in D$. Comparing the above two equations, we have
  $$
  (\alpha^{p^m} - \alpha)^{2p^{\ell}} f(b) = 0
  $$
  for all $\alpha \in Z(D)$ and $b \in D$, implying either $f = 0$ or $\alpha^{p^m} - \alpha = 0$ for all $\alpha \in Z(D)$. If $f \ne 0$, then $Z(D)$ is a subfield of the finite field of order $p^m$, implying $p^m$ is a positive power of $|Z(D)|$.
\end{proof}

We are ready to give the proof of Theorem \ref{thm10}.\vskip6pt

\noindent {\bf Proof of Theorem \ref{thm10}.}
In view Theorem \ref{thm3}, we may assume that $D$ satisfies ($\ddag$).
By Proposition \ref{pro8}, we may assume that $g=-f$. Note that Case (I) has been proved in Lemma \ref{lem5}.
The rest is to deal with Case (II). That is, we may assume that $n = p^{\ell+m}+p^{\ell}$ for some integers $\ell \geq 0$ and $m \geq 0$, $(\ell, m)\ne (0, 0)$.

Suppose on the contrary that $f\ne 0$. In view of Lemmas \ref{lem9} and \ref{lem7}, $Z(D)$ is a finite field.
By Lemma \ref{lem4} (i) we have
 \begin{eqnarray*}
f(x^2)&=& ((1+x)^n -x^n -1 )f(x)\\
          &=& ((1+x)^{p^{\ell+m}+p^{\ell}} -x^{p^{\ell+m}+p^{\ell}} -1 )f(x)\\
           &=& \big(x^{p^{\ell+m}}+x^{p^{\ell}}\big)f(x).
\end{eqnarray*}
Note that $p^{\ell+m}>1$. It follows from Corollary \ref{cor3} that $D$ is finite-dimensional over $Z(D)$.
Since $|Z(D)|<\infty$, $D$ is a finite division ring.  By Wedderburn's theorem, $D$ is commutative, a contradiction.
\hfill $\square$

\section{Rings additively generated by units}
  A ring (or an algebra) $R$ is said to be additively generated by units if each element of $R$ can be written as a finite sum of units in $R$. We are now ready to show our final result, which extends Theorem \ref{thm3} to algebras additively generated by units.

  \begin{thm} \label{thm5}
    Let $F$ be a field and $R$ be an $F$-algebra additively generated by units. Given an integer $n \ne 2$, suppose that $f,g \colon R \rightarrow R$ are additive maps satisfying
    \begin{eqnarray*}
      f(x) = x^n g(x^{-1})
    \end{eqnarray*}
    for all $x \in R^{\times}$. Then $f = g = 0$ except when ${\text{\rm char}}\,F = p >0$ with $p-1 \mid n-2$.
  \end{thm}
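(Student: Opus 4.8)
The plan is to reduce Theorem \ref{thm5} to the division-ring case already established in Theorem \ref{thm3}. The hypothesis that $R$ is additively generated by units is exactly what is needed to transfer a conclusion about behaviour on $R^{\times}$ to behaviour on all of $R$, so the structure of the argument will be: first show $f$ and $g$ vanish on every unit, then use additivity plus the generation hypothesis to conclude $f = g = 0$ on all of $R$.

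First I would handle the characteristic restriction by invoking Theorem \ref{thm2} (with $R$ in place of the ambient algebra): if ${\text{\rm char}}\,F = 0$, or if ${\text{\rm char}}\,F = p > 0$ with $p - 1 \nmid n - 2$, then the Vandermonde-style scaling argument there gives $k(k^{n-2}-1)f(x) = 0$ for all $x \in R^{\times}$, with $k$ chosen so that $k(k^{n-2}-1) \ne 0$ in $F$; hence $f$ vanishes on $R^{\times}$, and then $g$ vanishes on $R^{\times}$ by the defining identity $g(x^{-1}) = x^{-n}f(x)$ (equivalently $f(x) = x^n g(x^{-1})$ evaluated as $x$ ranges over units). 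This step is essentially verbatim from Theorem \ref{thm2}; the only thing to note is that its proof never uses that the ring is a division ring — it only uses that the relevant scalars act and that $x$ and $kx$ are simultaneously units, which holds in any $F$-algebra. So outside the exceptional case, $f|_{R^{\times}} = 0$ and $g|_{R^{\times}} = 0$.

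Next, the passage from $R^{\times}$ to $R$: let $x \in R$ be arbitrary. By hypothesis $x = u_1 + \cdots + u_r$ for some units $u_i \in R^{\times}$. Since $f$ is additive, $f(x) = \sum_{i=1}^r f(u_i) = 0$ because each $f(u_i) = 0$. The same works for $g$. Hence $f = g = 0$ on $R$, which is the claimed conclusion (the exceptional case ${\text{\rm char}}\,F = p > 0$ with $p - 1 \mid n - 2$ being genuinely excluded, as Example \ref{example1}(v) already shows for a noncommutative domain that need not be a division ring — though here one would also want a unit-generated example, e.g. a matrix algebra $\M_r(\mathbb{Z}/p\mathbb{Z})$ with suitable Frobenius-type $f,g$, to see sharpness in this setting).

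I do not expect a serious obstacle here: the theorem is a soft corollary of the scaling argument in Theorem \ref{thm2} together with the generation hypothesis, and the functional-identity machinery of Sections 2–5 is not needed at all. The one point that requires a sentence of care is confirming that the proof of Theorem \ref{thm2} is valid for a general $F$-algebra rather than only for division rings or rings with many units — but inspecting that proof shows it uses only additivity of $f$, the identity on $R^{\times}$, and closure of $R^{\times}$ under multiplication by the nonzero scalar $k$, all of which are available. A secondary subtlety worth a remark is that when $R^{\times}$ is small (for instance if $R$ has few units), the generation hypothesis could fail to hold unless $R$ is trivial or well-behaved; but that is a hypothesis we are given, so nothing needs to be proved about it.
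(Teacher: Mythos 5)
Your proposal is correct and follows essentially the same route as the paper: apply Theorem \ref{thm2} (which is already stated for a general $F$-algebra, so no extra verification is needed) to get $f=g=0$ on $R^{\times}$ outside the exceptional characteristic case, then use additivity and the hypothesis that every element is a finite sum of units to conclude $f=g=0$ on all of $R$. Nothing further is required.
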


  \begin{proof}
    It follows from Theorem \ref{thm2} that $f = g = 0$ on $R^{\times}$. Let $x \in R$. Then we can write $x = u_1 + \cdots + u_m$, where $u_1, \dots, u_m \in R^{\times}$. Now we compute
    \begin{eqnarray*}
    f(x) = f(u_1 + \cdots + u_m)=f(u_1) + \cdots + f(u_m)= 0,
    \end{eqnarray*}
    implying $f=0$ and so $g=0$, as desired.
  \end{proof}

  Note that the class of rings additively generated by units contains several natural classes of rings. We state some of them here as the end of the article.

  \begin{examp}
  The following rings are additively generated by units:

    \begin{enumerate} [{\rm (i)}]
      \item The ring of all linear transformations of vector spaces over a division ring (\cite{Wolfson1953,Zelinsky1954}).
      \item The ring of all real-valued continuous functions on any completely regular Hausdorff space (\cite{Raphael1974}).
      \item Any real or complex Banach algebra (\cite{Vamos2005}).
      \item The matrix ring $\M_n (R)$, $n>1$, over any ring $R$ (\cite{Raphael1974}).
      \item The ring of all $\omega \times \omega$ row and column-finite matrices over any ring (\cite{Wang2009}).
      \item Any unit-regular ring in which $2$ is a unit (\cite{Ehrlich1968}).
      \item Any unit-regular ring in which $1$ is a sum of two units (\cite{Grover2014}).
      \item The ring of all endomorphisms of any free module of (finite or infinite) rank $>1$ over a PID (\cite{Wans1995}, or see \cite{Meehan2001}).
      \item The group ring $R[C_n]$, where $R$ is any local ring with Jacobson radical $J(R)$ such that ${\text{\rm char}}\,R/J(R) \ne 2$ and $C_n$ is any cyclic group of order $n$ (\cite{Wang2009}).
    \end{enumerate}
  \end{examp}

  We refer the reader to \cite{Srivastava2010} and related articles for more details and examples.

\end{document}